\newtheorem{theorem}{Theorem}[section]
\newtheorem{lemma}[theorem]{Lemma}
\newtheorem{corollary}[theorem]{Corollary}
\newtheorem{algorithm}[theorem]{Algorithm}
\theoremstyle{definition}
\newtheorem{example}[theorem]{Example}
\newcommand{\MaxMinDist}{maximum inner distance}
\newcommand{\innerDist}{inner distance}
\newcommand{\AdjDist}{adjacent distance}
\newcommand{\MmD}{\ensuremath{\mathrm{MiD}}}
\newcommand{\GrantNumber}{DMS-1852378}
\newcommand{\floor}[1]{\ensuremath{\left\lfloor #1 \right\rfloor}}
\newcommand{\term}[1]{\textbf{#1}}
\newcommand{\ts}[1]{\textsuperscript{#1}}
\title{Distance in Latin Squares}
\author{
Omar A. Garcia \\ 
University of Texas \\
\url{oaceval1501@utexas.edu}
\and
Paige Beidelman \\ 
University of Mary Washington \\ 
\url{pbeidelm@mail.umw.edu} 
\and
Jieqi Di \\ 
Boston College \\ 
\url{dij@bc.edu} 
\and
James Hammer \\ 
Cedar Crest College \\ 
\url{jmhammer@cedarcrest.edu} 
\and 
Mitchel O'Connor \\ 
Whitman College \\ 
\url{oconnoml@whitman.edu}
\and 
Caitlin Owens\\
DeSales University\\
\url{caitlin.owens@desales.edu}
\and 
Yewen Sun \\ 
UC Santa Barbara \\ 
\url{yewen@ucsb.edu}}
\date{\today}
\begin{document}

\maketitle
    
\begin{abstract}
A Latin square of order $n$ is an $n\times n$ array which contains $n$ distinct symbols exactly once in each row and column. We define the adjacent distance between two adjacent cells (containing integers) to be their difference modulo $n$, and inner distance of a Latin square to be the minimum of adjacent distances in the Latin square. By first establishing upper bounds and then constructing squares with said inner distance, we found the maximum inner distance of an $n \times n$ Latin square to be $\left\lfloor\frac{n-1}{2}\right\rfloor$. We then studied special kinds of Latin squares such as pandiagonals (also known as Knut-Vik designs), as well as Sudoku Latin squares. This research was conducted at the REU at Moravian College on Research Challenges of Computational and Experimental Mathematics, with support from the National Science Foundation. 
\end{abstract}

\section{Introduction}
In this paper, we will define and investigate the \term{inner distance} of Latin squares. In previous literature, the term \textit{distance} has been used to denote the number of different cells between two $n\times n$ Latin squares, such as in \cite{ramadurai2016distances, xiao2017construction}, or the literal Euclidean distance between cells such as in \cite{luo2012distance}. In this paper, we ask novel questions about Latin squares and provide preliminary results on what we will refer to as the \textit{inner distance} of a Latin square. 

A \term{Latin square} is an $n \times n$ array with entries from a set $S$ of order $n$, such that each row and each column contains every element in $S$. An element in $S$ is called a symbol of the Latin square \cite{book:Combinatorics}. For the duration of this paper, we will use the integers 1 through $n$ as our symbols.

\begin{figure} [H]
    \centering
    \begin{tikzpicture}[scale=.6]
\draw(0,0)grid(5,5); 
\draw[step=5,ultra thick](0,0)grid(5,5);
\foreach\x[count=\i] in{1, 2, 3, 4, 5}{\node at(\i-0.5,4.5){$\x$};};
\foreach\x[count=\i] in{ 2, 3, 4, 5, 1}{\node at(\i-0.5,3.5){$\x$};};
\foreach\x[count=\i] in{3, 4, \textbf{5}, 1, 2}{\node at(\i-0.5,2.5){$\x$};};
\foreach\x[count=\i] in{4, 5, 1, 2, 3}{\node at(\i-0.5,1.5){$\x$};};
\foreach\x[count=\i] in{5, 1, 2, 3, 4}{\node at(\i-0.5,0.5){$\x$};};
\end{tikzpicture}
    \caption{$5 \times 5$ Latin square}
    \label{fig:LS5}
\end{figure}
We say some set (such as a row, column, set of cells, etc.) is \term{Latin} if it only contains each of the integers in $\{1,2, \dots, n\}$ exactly once. Some constructions of Latin squares are particularly useful for this paper. Let $L_n$ be an $n\times n$ Latin square and let $k$ be an integer such that $\gcd(k,n)=1$.  Then $L_n$ is said to be a \textbf{shift-by-$\mathbf{k}$} Latin square if the symbol for any cell $(i,j)$ also lies in the cell $(i+1 \pmod{n},j+k\pmod{n})$ where $i$ represents the row and $j$ represents the column. When $k=1$, the Latin square is known as a \textit{circulant Latin square}, when $k=-1$, the Latin square is called a \textit{back circulant} Latin square.

Figure \ref{fig:LS5} is an example of a back circulant $5\times 5$ Latin square and Figure \ref{examples for shift by k} contains examples of $5\times 5$ shift-by-$k$ Latin squares for $k=1,2,3$.

\begin{figure}[H]
    \centering
    \begin{tikzpicture}[scale=.6]
\draw(0,0)grid(5,5); 
\draw[step=5,ultra thick](0,0)grid(5,5);
\foreach\x[count=\i] in{1, 2, 3, 4, 5}{\node at(\i-0.5,4.5){$\x$};};
\foreach\x[count=\i] in{5, 1, 2, 3, 4}{\node at(\i-0.5,3.5){$\x$};};
\foreach\x[count=\i] in{4, 5, 1, 2, 3}{\node at(\i-0.5,2.5){$\x$};};
\foreach\x[count=\i] in{3, 4, 5, 1, 2}{\node at(\i-0.5,1.5){$\x$};};
\foreach\x[count=\i] in{2, 3, 4, 5, 1}{\node at(\i-0.5,0.5){$\x$};};
\end{tikzpicture}    
     \qquad
    \begin{tikzpicture}[scale=.6]
\draw(0,0)grid(5,5); 
\draw[step=5,ultra thick](0,0)grid(5,5);
\foreach\x[count=\i] in{1, 2, 3, 4, 5}{\node at(\i-0.5,4.5){$\x$};};
\foreach\x[count=\i] in{4, 5, 1, 2, 3 }{\node at(\i-0.5,3.5){$\x$};};
\foreach\x[count=\i] in{2, 3, 4, 5, 1}{\node at(\i-0.5,2.5){$\x$};};
\foreach\x[count=\i] in{5, 1, 2, 3, 4}{\node at(\i-0.5,1.5){$\x$};};
\foreach\x[count=\i] in{3, 4, 5, 1, 2}{\node at(\i-0.5,0.5){$\x$};};
\end{tikzpicture}
    \qquad
\begin{tikzpicture}[scale=.6]
\draw(0,0)grid(5,5); 
\draw[step=5,ultra thick](0,0)grid(5,5);
\foreach\x[count=\i] in{1, 2, 3, 4, 5}{\node at(\i-0.5,4.5){$\x$};};
\foreach\x[count=\i] in{3, 4, 5, 1, 2 }{\node at(\i-0.5,3.5){$\x$};};
\foreach\x[count=\i] in{5, 1, 2, 3, 4 }{\node at(\i-0.5,2.5){$\x$};};
\foreach\x[count=\i] in{2, 3, 4, 5, 1}{\node at(\i-0.5,1.5){$\x$};};
\foreach\x[count=\i] in{4, 5, 1, 2, 3}{\node at(\i-0.5,0.5){$\x$};};
\end{tikzpicture}
   
    \caption{Left to right: a circulant, Shift-By-2, and Shift-By-3 Latin square}
    \label{examples for shift by k}
\end{figure}

If $L_n$ is a Latin square, use the symbol $(i,j)$ to denote the cell on the $i^{\rm th}$ row and $j^{\rm th}$ column. We write $m_{i,j}$ to denote the entry of the cell $(i,j)$. For example, in the Latin squares in Figure \ref{examples for shift by k}, $m_{2, 1}$ takes values 5, 4, and 3 in the squares on the left, middle, and right, respectively.

Two cells in a Latin square are \term{adjacent} if the cells share an edge horizontally or vertically. The cells $(i_1,j_1)$ and $(i_2,j_2)$ are horizontally adjacent if $i_1=i_2$ and $|j_2-j_1|=1$, and vertically adjacent if $j_1=j_2$ and $|i_2-i_1|=1$. For example in Figure \ref{fig:LS5}, the cell with the bold $5$ is adjacent to cell above with $4$, below with $1$, left with $4$, and right with $1$.

We now introduce our terminology. The \term{\AdjDist} between two cells in a Latin square which contain symbols $u,v,$ is defined to be smallest difference from $u$ to $v$ (or vice versa) modulo $n$, given by $\min\{u-v \pmod{n}, v-u \pmod{n} \}$, such that $0\leq \min\{u-v \pmod{n}, v-u \pmod{n} \} \leq n-1$. In Figure \ref{fig:LS5}, the \AdjDist~between any pair of adjacent cells is 1. In this paper, we simply refer to the adjacent distance between cells as the \textit{distance} between those cells. 

We define the \textbf{\innerDist} of a Latin square to be the minimum of all distances between adjacent cells. For example, all Latin squares shown in Figures \ref{fig:LS5} and \ref{examples for shift by k} have \innerDist~1. In this paper, we establish the maximum value of the inner distance for certain classes of Latin squares, and we will provide bounds for the maximum value in other classes. We will use $\MmD(L_n)$ to denote the maximum inner distance of the set of Latin squares of order $n$, $\MmD(P_n)$ to denote the maximum inner distance of the set of pandiagonal Latin squares of order $n$, and $\MmD(L_{a,b})$ to denote the maximum inner distance of the set of $(a,b)$-Sudoku Latin squares of order $ab$. These classes of Latin squares just mentioned will be defined later in the paper.

Let $n = a \times b$ be the order of a Latin square. We define $B_i = \floor{\frac{i-1}{a}}$ and $S_j = \floor{\frac{j-1}{b}}$. We say the cell $(i,j)$ lies on the $B_i\ts{th}$ \textbf{band} and the $S_j$\ts{th} \textbf{stack}. Each band consists of $a$ consecutive rows, and each stack consists of $b$ consecutive columns. These will be useful when working with Sudoku Latin squares, defined in Section \ref{Sudoku section} (also where we reserve the symbols $a,b$ in this paper). We call the intersections of a band and a stack a \textbf{block}, which is an $a\times b$ region in a Latin square. There are $\frac{n}{a}=b$ bands and $\frac{n}{b}=a$ stacks. If $k = B_i$ and $l = S_j$, we denote the block along the intersection of the $k\ts{th}$ band and $l\ts{th}$ row by $M_{k,l}$.

The following theorems from basic number theory are frequently used in this paper, and so their proofs are emphasized. Let $a$, $b$, and $n$ be integers with $n > 0$. We use $\gcd(a,b)$ to denote the greatest common divisor of $a$ and $b$, and we use the notation $a \pmod{n}$ to note the unique integer $r$ such that $a\equiv r \pmod{n}$, with $1\leq r \leq n$. 

\begin{theorem}\label{coprime increment theorem}\cite{numberTheory}
If $n$ is a natural number, $a$ is an integer, and $k$ is an integer relatively prime to $n$, then 
$\{1,2, \dots, n\}  = \{a +km \pmod{n} \mid 0\leq m\leq n-1\} $.
\end{theorem}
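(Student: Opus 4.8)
The plan is to show the two sets are equal by proving the right-hand set has exactly $n$ elements, all distinct, all lying in $\{1, \dots, n\}$. Since the left-hand set $\{1, 2, \dots, n\}$ has $n$ elements and both sets are subsets of $\{1, \dots, n\}$ (each element of the form $a + km \pmod{n}$ is by definition reduced to lie in this range), it suffices to establish that the $n$ values $a + km \pmod{n}$ for $m = 0, 1, \dots, n-1$ are pairwise distinct. A set of $n$ distinct elements inside a set of size $n$ must be the whole set.

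First I would set up the distinctness argument directly. Suppose $a + k m_1 \equiv a + k m_2 \pmod{n}$ for two indices $m_1, m_2 \in \{0, 1, \dots, n-1\}$. Subtracting $a$ from both sides gives $k m_1 \equiv k m_2 \pmod{n}$, hence $k(m_1 - m_2) \equiv 0 \pmod{n}$, i.e. $n \mid k(m_1 - m_2)$. This is the crux of the argument.

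The key step — and the main obstacle, though it is a standard one — is invoking the coprimality of $k$ and $n$ to cancel the factor $k$. Because $\gcd(k, n) = 1$, the divisibility $n \mid k(m_1 - m_2)$ forces $n \mid (m_1 - m_2)$; this is Euclid's lemma, or equivalently follows from the existence of a multiplicative inverse of $k$ modulo $n$. Since $m_1, m_2 \in \{0, 1, \dots, n-1\}$, their difference satisfies $|m_1 - m_2| \leq n - 1 < n$, so the only multiple of $n$ in that range is $0$, giving $m_1 = m_2$. This establishes that the map $m \mapsto a + km \pmod{n}$ is injective on $\{0, 1, \dots, n-1\}$.

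Finally I would conclude by counting: the image is a subset of $\{1, 2, \dots, n\}$ of cardinality $n$, and since $|\{1, 2, \dots, n\}| = n$, the image must equal $\{1, 2, \dots, n\}$, which is exactly the claimed equality. The whole argument is elementary; the only point requiring care is the cancellation of $k$, which relies essentially on $\gcd(k, n) = 1$ and would fail without it.
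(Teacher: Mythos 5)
Your proof is correct and follows essentially the same route as the paper's: assume $a+km_1\equiv a+km_2\pmod{n}$, cancel $a$, use $\gcd(k,n)=1$ to conclude $n\mid(m_1-m_2)$, and bound $|m_1-m_2|\leq n-1$ to force $m_1=m_2$. Your explicit final counting step (an injective map into a set of equal size is a bijection) is left implicit in the paper, but the argument is the same.
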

\begin{proof}
Assume two elements from $ \{a +km \pmod{n} \mid 0\leq m \leq n-1\}$ are equivalent for some $m_1,m_2$:
\[a+km_1 \equiv a+km_2 \pmod{n}\implies k(m_1-m_2) \equiv 0 \pmod{n}.\]
Then $n$ divides $k(m_1-m_2)$, and since $\gcd(n,k)=1$, $n$ divides $(m_1-m_2)$. However, since $0\leq m_1,m_2\leq n-1$, we have that $0 \leq |m_1-m_2| \leq n-1$. Therefore $m_1-m_2=0$, and $m_1=m_2$.
\end{proof}

\begin{theorem}\label{modular divides stuff}\cite{numberTheory}
Let $a,b,$ and $n$ be integers, with $n>0$. Suppose for some integer $x$ that $ax\equiv b\pmod{n}$. Then $\gcd(n,a)\mid b$. If $b = 0$, then $\left.\frac{n}{\gcd(n,a)} \right| x$.
\end{theorem}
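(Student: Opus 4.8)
The plan is to translate both claims into divisibility statements via the definition of congruence and then push everything through $d := \gcd(n,a)$. Recall that $ax \equiv b \pmod{n}$ means precisely that $n \mid (ax - b)$, so there is an integer $q$ with $ax - b = nq$, i.e.\ $b = ax - nq$.

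For the first assertion I would argue directly: since $d \mid a$ and $d \mid n$ by the definition of the greatest common divisor, $d$ divides every integer linear combination of $a$ and $n$, and in particular $d \mid (ax - nq) = b$. This step is immediate and needs nothing beyond the closure of divisibility under linear combinations.

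For the second assertion I would set $b = 0$, so the hypothesis becomes $n \mid ax$. Writing $n = d\,n'$ and $a = d\,a'$, where $n' = n/\gcd(n,a)$ and $a' = a/\gcd(n,a)$ satisfy $\gcd(n',a') = 1$ (this coprimality is exactly the defining property of the gcd), I would cancel the common factor $d$ from $n \mid ax$ to obtain $n' \mid a' x$. The goal $\frac{n}{\gcd(n,a)} \mid x$ is then just $n' \mid x$, so it remains to pass from $n' \mid a' x$ to $n' \mid x$.

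The one genuinely non-routine step is this last implication, which is Euclid's lemma in the form ``a factor coprime to the modulus may be cancelled.'' I would either cite it as standard or prove it in a line using B\'ezout: since $\gcd(n',a') = 1$, there exist integers $s,t$ with $s\,n' + t\,a' = 1$; multiplying by $x$ gives $x = s\,n'x + t\,(a'x)$, and as $n'$ divides both summands on the right (the second because $n' \mid a'x$ by assumption), we conclude $n' \mid x$. Everything else in the argument is routine divisibility bookkeeping, so this cancellation is the sole place where the coprimality extracted from the gcd does real work.
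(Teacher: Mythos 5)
Your proposal is correct and follows essentially the same route as the paper's proof: both parts translate the congruence into a divisibility statement, use that $\gcd(n,a)$ divides any integer linear combination of $a$ and $n$, and then cancel the gcd and invoke coprimality of $n/\gcd(n,a)$ and $a/\gcd(n,a)$ to conclude. The only difference is cosmetic: you spell out the final cancellation step via B\'ezout, whereas the paper asserts it directly from $\gcd(n_0,a_0)=1$.
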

\begin{proof}
Let $ax\equiv b\pmod{n}$. Then by definition $n$ divides $ax-b$, so that $ax-b = nk$ for some integer $k$. Then $ax - nk = b$, so that $\gcd(n,a)$ divides the left hand side, and so $\gcd(n,a)\mid b$. 

Let $n = \gcd(n,a)n_0$, $a = \gcd(n,a)a_0$, so that we want to show $n_0\mid x$. Note that $\gcd(n_0, a_0)=1$, since the $\gcd$ pulls out all common factors of $a$, $n$. If $b=0$, then we have that $ax = nk$. Dividing by $\gcd(n,a)$ we have that $a_0x = n_0k$. So $n_0$ divides $a_0x$, and since $\gcd(n_0, a_0)=1$, then $n_0\mid x$.
\end{proof}

\begin{theorem}\label{gcd stuff}
Let $a$ be a positive integer, $b$ be an odd positive integer, and $n=ab$. Then $\gcd(n, \frac{n-a}{2}) = a$.
\end{theorem}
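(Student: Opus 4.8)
The plan is to reduce the whole computation to the single fact that $b$ and $\frac{b-1}{2}$ are coprime, by pulling the common factor $a$ out of both arguments of the gcd. First I would confirm that $\frac{n-a}{2}$ is genuinely an integer, since the statement would otherwise be ill-posed: because $n = ab$ we have $n - a = a(b-1)$, and as $b$ is odd, $b-1$ is even, so
\[
\frac{n-a}{2} = a \cdot \frac{b-1}{2},
\]
with $\frac{b-1}{2} \in \mathbb{Z}$. This is the one place where the hypothesis that $b$ is odd gets used.

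Next I would write both entries of the gcd as multiples of $a$, namely $n = a \cdot b$ and $\frac{n-a}{2} = a \cdot \frac{b-1}{2}$, and apply the standard identity $\gcd(ax, ay) = a\,\gcd(x,y)$, valid for $a > 0$, to obtain
\[
\gcd\!\left(n, \tfrac{n-a}{2}\right) = a \cdot \gcd\!\left(b, \tfrac{b-1}{2}\right).
\]
This converts the problem into the self-contained claim that $\gcd\!\left(b, \tfrac{b-1}{2}\right) = 1$.

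Finally I would settle coprimality by a short divisor argument: let $d = \gcd\!\left(b, \tfrac{b-1}{2}\right)$. Then $d \mid b$ and $d \mid \frac{b-1}{2}$; the latter gives $d \mid (b-1)$, so $d$ divides $b - (b-1) = 1$, forcing $d = 1$. Substituting back yields $\gcd\!\left(n, \tfrac{n-a}{2}\right) = a$, as desired.

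The argument is short and I do not anticipate a serious obstacle; the only points demanding care are verifying integrality of $\frac{n-a}{2}$ (the single use of $b$ being odd) and justifying the factoring identity $\gcd(ax, ay) = a\,\gcd(x,y)$, which is where the hypothesis $a > 0$ is needed. An alternative would route the coprimality step through Theorem \ref{modular divides stuff}, but the direct divisor computation above is cleaner and avoids invoking modular machinery for such an elementary fact.
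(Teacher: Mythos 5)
Your proof is correct, but it takes a slightly different route from the paper's. You pull the common factor $a$ out of both arguments, using the identity $\gcd(ax,ay)=a\gcd(x,y)$, and then reduce everything to the self-contained claim $\gcd\left(b,\tfrac{b-1}{2}\right)=1$, which you settle by noting that a common divisor of $b$ and $b-1$ divides $1$. The paper instead runs a two-sided divisibility sandwich: it first shows $\gcd(n,n-a)=a$ (any common divisor of $n$ and $n-a$ divides their difference $a$, and $a$ divides both), then observes that $\gcd\left(n,\tfrac{n-a}{2}\right)$ divides $\gcd(n,n-a)=a$, and finally uses the oddness of $b$ to check that $a$ still divides $\tfrac{n-a}{2}=\tfrac{a(b-1)}{2}$, forcing equality. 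Both arguments are elementary and hinge on the same two facts --- that $a\mid\tfrac{n-a}{2}$ precisely because $b-1$ is even, and that the quotients $b$ and $\tfrac{b-1}{2}$ share no factor --- but your factorization makes the role of the hypothesis ``$b$ odd'' more transparent (it is exactly the integrality of $\tfrac{b-1}{2}$), at the cost of invoking the multiplicativity of the gcd, which the paper's version avoids by working only with raw divisibility.
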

\begin{proof}
Let $a$ be a positive integer, $b$ be an odd positive integer, and $n=ab$. Since $\gcd(n, n-a) \mid a$, $a \mid n$, and $a \mid (n-a)$, $\gcd(n, n-a)=a$. Since any divisor of $\frac{n-a}{2}$ is also a divisor of $n-a$, $\gcd(n, \frac{n-a}{2}) \mid \gcd(n, n-a)$. Since $b$ is odd, $b-1$ is even, and so $a$ is still a divisor of $\frac{n-a}{2}=\frac{ab-a}{2}=\frac{a(b-1)}{2}$. Thus, $\gcd(n, \frac{n-a}{2})=a$.
\end{proof}

\section{Maximum Inner Distance}\label{MmD Section} 

We now address the question that this paper is concerned with: What is the \MaxMinDist~that one can obtain for a given Latin square of order $n$?

\subsection{An Upper Bound For the Maximum Inner Distance of Latin Squares}
We begin with a trivial observation on $\MmD(L_n)$. We can observe for $n\leq 2$, there are only two Latin squares of order two (using the symbol set $\{1,2\}$), each with \innerDist~$1$. The Latin square of order one has only one cell, and thus has no defined \innerDist.

\begin{lemma}\label{upperbound}
If $n\geq 3$, then $$\MmD(L_n)\leq \floor{\frac{n-1}{2}}.$$
\end{lemma}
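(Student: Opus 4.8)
The plan is to first bound the adjacent distance of a single pair of adjacent cells, and then to split on the parity of $n$, since the target bound $\floor{(n-1)/2}$ coincides with the largest attainable adjacent distance only in the odd case. Concretely, for two distinct symbols $u,v$, setting $d = u-v \pmod n$ gives $v - u \equiv n-d \pmod n$, so the adjacent distance $\min\{d,\,n-d\}$ is at most $\floor{n/2}$, with equality forcing $d \in \{\floor{n/2}, \lceil n/2 \rceil\}$. Because any two adjacent cells lie in a common row or column, they carry distinct symbols, so every adjacent distance is at most $\floor{n/2}$.

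When $n$ is odd I would simply note that $\floor{n/2} = (n-1)/2 = \floor{(n-1)/2}$, so every adjacent distance, and hence their minimum (the inner distance), is already at most $\floor{(n-1)/2}$. The statement then holds with no further work, which is worth flagging explicitly: for odd $n$ the lemma is essentially vacuous.

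The genuine content is the even case, where $\floor{n/2} = n/2$ but $\floor{(n-1)/2} = n/2 - 1$. Here I would argue by contradiction. Suppose some Latin square of even order $n \geq 4$ had inner distance equal to the maximal value $n/2$. Then every adjacent pair would have distance exactly $n/2$, and by the equality condition above any two horizontally adjacent symbols $u,v$ would satisfy $u - v \equiv n/2 \pmod n$. Reading a single row from left to right then forces the alternating pattern $s,\, s + n/2,\, s,\, s + n/2, \dots \pmod n$, which uses only the two symbols $s$ and $s + n/2$; in particular the first and third cells would repeat a symbol. Since a row of a Latin square of order $n \geq 3$ contains $n$ distinct symbols, this is a contradiction, so the inner distance is at most $n/2 - 1 = \floor{(n-1)/2}$.

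I expect the only real subtlety to be the structural observation, not any computation: once one sees that a distance of $n/2$ pins down adjacent symbols uniquely modulo $n$, each row collapses to just two symbols and the Latin condition is immediately violated. The main ``obstacle'' is therefore conceptual organization, namely recognizing that the odd case is trivial and concentrating all the argument in the even case via this row-collapsing contradiction.
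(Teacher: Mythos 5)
Your proposal is correct and follows essentially the same route as the paper: bound every adjacent distance by $\floor{n/2}$, observe that this already gives the result for odd $n$, and for even $n$ use the fact that only the single symbol $u+\frac{n}{2}$ realizes distance $\frac{n}{2}$ from $u$. The only cosmetic difference is that you run the even case as a whole-row contradiction (the row would collapse to two alternating symbols), whereas the paper argues directly that an interior cell's two distinct row-neighbors cannot both be at distance $\frac{n}{2}$; the underlying observation is identical.
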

\begin{proof}
Let $n$ be odd. The possible differences between cells in a Latin square of order $n$ are
\[\left\{1, 2, \dots, \frac{n-1}{2}, \frac{n+1}{2}, \dots, n-1\right\}.\]
Let $u,v$ be integers in adjacent cells. If $u-v \geq \frac{n+1}{2}$, then $v-u \leq -\frac{n+1}{2} \equiv \frac{n-1}{2} \pmod{n}$. Since $\frac{n-1}{2} = \floor{\frac{n-1}{2}}$, the largest distance between two adjacent cells is $\floor{\frac{n-1}{2}}$. Then the inner distance of odd ordered squares is at most $\frac{n-1}{2}$.

Let $n$ be even. Thus the same analysis shows that the distance is at most $\frac{n}{2}$. However, for any integer $x\in \{1,2,\dots, n\}$, there is only one integer at maximum distance, namely $x + \frac{n}{2} \equiv x-\frac{n}{2} \pmod{n}$. If $n\geq 3$ and $n$ is even, then for $2\leq j \leq n-1$, $m_{i,j}$ has at least two distinct integers: $m_{i,j-1}$ and $m_{i,j+1}$ adjacent to it. This is due to the cells to the left and right are in the same row $i$ and thus cannot contain the same integer. Then, at least one of $m_{i,j-1}$ and $m_{i,j+1}$ has distance less than or equal to $\frac{n}{2}-1 = \frac{n-2}{2} = \floor{\frac{n-1}{2}}$, and therefore the inner distance of an even ordered square is at most $\floor{\frac{n-1}{2}}$.
\end{proof}
Lemma \ref{upperbound} demonstrates that in the modular world, adding beyond a certain threshold, about $\frac{n}{2}$, is equivalent to subtracting a smaller amount. We will now prove, by construction, that this bound can be achieved.  

\subsection{An Algorithm to Produce Latin Squares With Maximum Inner Distance}
\begin{algorithm}\label{generalized algorithm}
Let $1 \leq c,r\leq n-1$, $R = \frac{n}{\gcd(n,r)},$ and  $C = \frac{n}{\gcd(n,c)}$. Let $\alpha, \beta \in \mathbb{Z}$, such that $\gcd(\beta,c)=1$ and $\gcd(\alpha,r)=1$. 

Fill the cell $(i,j)$ in an $n\times n$ array with the integer
\[m_{i,j} = 1 + (i-1)r + (j-1)c + \alpha\left\lfloor{\frac{i-1}{R}}\right\rfloor + \beta\left\lfloor{\frac{j-1}{C}}\right\rfloor \pmod{n}.\]
\end{algorithm}

\begin{theorem}
Algorithm \ref{generalized algorithm} produces a Latin square.
\end{theorem}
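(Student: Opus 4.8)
The plan is to verify directly that each row and each column of the array contains every symbol in $\{1,\dots,n\}$ exactly once. The formula is symmetric under the simultaneous swap $(i,r,\alpha,R)\leftrightarrow(j,c,\beta,C)$, so it suffices to prove the claim for rows; the column case follows verbatim after interchanging these roles and replacing the hypothesis $\gcd(\beta,c)=1$ by $\gcd(\alpha,r)=1$. Fix a row index $i$. The terms $1+(i-1)r+\alpha\lfloor(i-1)/R\rfloor$ are constant as $j$ varies, so row $i$ is Latin if and only if the function
\[ f(j) = (j-1)c + \beta\left\lfloor\tfrac{j-1}{C}\right\rfloor \pmod{n} \]
is a bijection on $\{1,\dots,n\}$; since this is a map of a finite set to itself, I only need to show $f$ is injective.

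The first step is to simplify the floor term through a block decomposition of the column index. Let $d=\gcd(n,c)$, so that $C=n/d$ and $n=dC$. Writing $j-1=qC+s$ with $0\le q\le d-1$ and $0\le s\le C-1$ gives a bijection between $\{0,1,\dots,n-1\}$ and such pairs $(q,s)$, and here $\lfloor(j-1)/C\rfloor=q$. Because $Cc=n(c/d)$ is a multiple of $n$, the block contribution $qCc$ vanishes modulo $n$, leaving
\[ f(j)\equiv sc+\beta q \pmod{n}. \]
This is the crux of the argument: the floor term, which at first appears to disrupt the modular arithmetic, in fact contributes only a per-block shift $\beta q$, cleanly decoupling the within-block index $s$ from the block index $q$.

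With this reduction, injectivity follows from the two number-theoretic lemmas. Suppose $sc+\beta q\equiv s'c+\beta q'\pmod{n}$ for two parameter pairs. Reducing modulo $d$ and using $d\mid c$ and $d\mid n$ yields $\beta q\equiv\beta q'\pmod{d}$. Since $d\mid c$ and $\gcd(\beta,c)=1$, we have $\gcd(\beta,d)=1$, so Theorem \ref{coprime increment theorem} gives $q\equiv q'\pmod d$; as $0\le q,q'\le d-1$, this forces $q=q'$. The congruence then collapses to $(s-s')c\equiv 0\pmod n$, and the $b=0$ clause of Theorem \ref{modular divides stuff} shows that $C=\frac{n}{\gcd(n,c)}$ divides $s-s'$; since $|s-s'|\le C-1<C$, we conclude $s=s'$. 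Hence $f$ is injective, so every row is Latin, and by the symmetric argument every column is Latin; therefore the array is a Latin square.

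The main obstacle is the block-decomposition step: one must notice that $Cc\equiv 0\pmod n$ so that the awkward floor term reduces to the single shift $\beta q$. Once that observation is in place, the remaining coprimality reduction $\gcd(\beta,d)=1$ and the divisibility $C\mid(s-s')$ are immediate applications of Theorems \ref{coprime increment theorem} and \ref{modular divides stuff}, respectively.
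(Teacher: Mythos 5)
Your proof is correct and takes essentially the same route as the paper's: your block index $q$ is exactly the paper's stack index $S_j$, and both arguments first force the two cells into the same stack via a $\gcd(n,c)$ divisibility (using $\gcd(\beta,c)=1$) and then force $j_1=j_2$ via $C\mid(j_1-j_2)$. Your explicit observation that $Cc\equiv 0\pmod{n}$ merely makes transparent a cancellation the paper performs implicitly.
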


\begin{proof}
Recall that an array of integers from $\{1,2,\dots, n\}$ is a Latin square if and only if the rows and columns are Latin. We show the rows are Latin, and the argument for columns is identical.

Consider an $n\times n$ array created by Algorithm \ref{generalized algorithm}. Assume two integers along the same row, $m_{i, j_1}$ and $m_{i, j_2}$, are equal. Setting the expressions for these two equal and cancelling terms we obtain:
\[(j_1 - 1)c +  \beta\floor{\frac{j_1-1}{C}} \equiv (j_2 - 1)c + \beta\floor{\frac{j_2-1}{C}} \pmod{n}.\]

Rewriting  this expression in terms of stacks, we have that 
\[(j_1 - 1)c + \beta S_{j_1} \equiv (j_2 - 1)c + \beta S_{j_2} \pmod{n}\iff (j_1-j_2)c \equiv \beta (S_{j_2} - S_{j_1}) \pmod{n}.\]

Then by Theorem \ref{modular divides stuff}, $\gcd(n,c)$ divides $\beta(S_{j_2}-S_{j_1})$. Since $\gcd(\beta,c)=1$, the $\gcd(n,c)$ divides $S_{j_2}-S_{j_1}$. However, this can only happen if $S_{j_1} = S_{j_2}$, as there are $\gcd(n, c)$ stacks in a Latin square, and so $0\leq |S_{j_1} - S_{j_2}| \leq \gcd(n, c) - 1$. 

Therefore, the cells are in the same stack, and so our congruence becomes $(j_1-j_2)c\equiv 0\pmod{n}$. By Theorem \ref{modular divides stuff}, $C$ divides $j_1-j_2$. But since there are $C$ columns in a stack, $0\leq |j_1 - j_2| \leq C - 1$, and so $j_2-j_1 = 0$. Therefore, $j_2=j_1$. 
\end{proof}

In Algorithm \ref{generalized algorithm}, the hard coded ones (except those inside the floor function) only serve to place a 1 in the top left corner, for convenience. They can be altered to other integers in $\{1,2,\dots, n\}$ or omitted entirely for the algorithm to work. Additionally, $r$ represents the difference between vertically adjacent cells within the same band and $c$ represents the horizontal distance between cells within the same stack. The offsets $\alpha$ and $\beta$ in the algorithm are only necessary for values of $r$ and $c$ which share common factors with $n$, to avoid repeats of entries which are in the same column but different bands or which are in the same row but different stacks. If $\gcd(n,r)=1$ then $\alpha = n$ always creates a Latin square, and if $\gcd(n,c)=1$ then $\beta = n$ always creates a Latin square. 

To exemplify Algorithm \ref{generalized algorithm}, let $n=9$, and suppose we want to create a square of order 9 with distance 4, the theoretical maximum. By Theorem \ref{coprime increment theorem}, the set $K = \{1 + 4m \pmod{9} \mid m\in G\} = \{1,2,\dots, n\}$. If we list the elements in $K$ in the natural order given by the expression $1+4m\pmod{9}$, we obtain $(1, 5, 9, 4, 8, 3, 7, 2, 6)$. Note that each ``adjacent'' coordinate in this vector has a difference of 4, modulo 9. Therefore we can fill the top row of an $n\times n$ grid with this list to make it Latin. We can do the same along each column by adding 4 along the way down as well. The resulting matrix is a Latin square with distance 4, in fact this one is a back circulant Latin square (if we had added $-4$ vertically, we obtain the circulant Latin square seen in Figure \ref{fig:LS9}). This works nicely since $\gcd(9,4) = 1$, i.e. the row and column increments are co-prime to $n$. 

When the row or column increment is not co-prime to $n$, we offset every stack or band to ensure the row or column stays Latin. For example if $n=9$ and $c = 3$, we would obtain $(1, 4, 7, 1, 4, 7, 1, 4, 7)$. But if every third cell (new stack) we added an extra one, we arrive at $(1, 4, 7, 2, 5, 8, 3, 6, 9)$, a row with distance 3 everywhere except the boundaries of the stacks, where it has distance 4. 

Algorithm \ref{generalized algorithm} is a way to create a Latin square with specific \innerDist. In some cases, it is the only way to produce Latin squares of maximum \innerDist~(see Theorem \ref{number of odd MmD squares}), but there are also cases where it fails to produce the best results (see Section \ref{Sudoku section}).

\begin{corollary}\label{InnerDist of algo}
The  \innerDist~of a Latin square from Algorithm \ref{generalized algorithm} is $\mathrm{min}\{\pm r \pmod{n},$ $\pm c \pmod{n},$ $\pm (r+\alpha) \pmod{n},$ $\pm (c+\beta) \pmod{n}\}$.
\end{corollary}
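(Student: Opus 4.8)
The plan is to read the inner distance directly off the closed form in Algorithm~\ref{generalized algorithm}: I would compute the difference between the entries of every pair of adjacent cells and then apply the definition of adjacent distance, namely that a difference of $d$ yields distance $\min\{d \pmod n,\, -d \pmod n\}$, abbreviated $\pm d \pmod n$. Since any two adjacent cells are either horizontally or vertically adjacent, the two cases can be handled separately.

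For horizontally adjacent cells $(i,j)$ and $(i,j+1)$, subtracting the two expressions cancels the $i$-dependent terms and leaves
\[ m_{i,j+1} - m_{i,j} \equiv c + \beta\left(\floor{\tfrac{j}{C}} - \floor{\tfrac{j-1}{C}}\right) \pmod n. \]
The key observation is that the floor difference $\floor{j/C} - \floor{(j-1)/C}$ equals $1$ exactly when $j$ is a multiple of $C$ (a stack boundary is crossed) and equals $0$ otherwise. Hence every horizontal difference is either $c$ (within a stack) or $c+\beta$ (across a stack boundary) modulo $n$, giving adjacent distances $\pm c \pmod n$ or $\pm(c+\beta)\pmod n$. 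The vertical case is identical with $i,R,r,\alpha$ in place of $j,C,c,\beta$: vertically adjacent cells differ by $r$ within a band and by $r+\alpha$ across a band boundary, giving distances $\pm r \pmod n$ and $\pm(r+\alpha)\pmod n$. Taking the minimum over all adjacent pairs then yields the claimed formula.

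The one point requiring care, and the main obstacle, is confirming that each of the four distances is actually realized by some adjacent pair, so that the minimum genuinely ranges over all four terms rather than a proper subset. Within-stack and within-band adjacencies always exist because $C = n/\gcd(n,c) \ge 2$ and $R = n/\gcd(n,r) \ge 2$ for $1 \le c,r \le n-1$, so the terms $\pm c$ and $\pm r$ are always attained. An across-stack adjacency exists precisely when there is more than one stack, i.e.\ $\gcd(n,c) \ge 2$; in the degenerate case $\gcd(n,c)=1$ there is a single stack and no such pair, but then the preceding remark fixes $\beta = n$, so $c+\beta \equiv c \pmod n$ and the term $\pm(c+\beta)$ coincides with the attained term $\pm c$. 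The analogous argument with $\alpha = n$ covers $\gcd(n,r)=1$. In every case the minimum over the four listed quantities equals the minimum over the adjacent distances actually present, which is by definition the inner distance.
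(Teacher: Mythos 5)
Your proposal is correct and takes essentially the same approach as the paper, which likewise reads the adjacent differences $c$, $c+\beta$, $r$, $r+\alpha$ directly off the formula according to whether a stack or band boundary is crossed. In fact you are more careful than the paper's one-paragraph justification: your observation that the boundary terms $\pm(c+\beta)$ and $\pm(r+\alpha)$ are only attained when there is more than one stack or band (and that the convention $\beta=n$, $\alpha=n$ makes the formula harmless in the degenerate case) addresses a point the paper silently glosses over.
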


The \AdjDist~of any given cells using Algorithm \ref{generalized algorithm} is $\pm r \pmod{n}$, $\pm c \pmod{n}$, or if crossing to a cell in a different block, $\pm (r+\alpha) \pmod{n}$, or $\pm (c+\beta) \pmod{n}$. The \innerDist~ is the minimum of these values.

\begin{theorem}
\label{MainResult}
If $n\geq3$,  then $$\MmD(L_n)= \left\lfloor\frac{n-1}{2}\right\rfloor.$$
\end{theorem}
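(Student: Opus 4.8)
The upper bound $\MmD(L_n)\le\floor{\frac{n-1}{2}}$ is already supplied by Lemma \ref{upperbound}, so the entire remaining task is to exhibit, for every $n\ge 3$, a single Latin square of order $n$ whose \innerDist~equals $\floor{\frac{n-1}{2}}$; such a square forces the reverse inequality $\MmD(L_n)\ge\floor{\frac{n-1}{2}}$ and hence equality. The plan is to feed suitable parameters into Algorithm \ref{generalized algorithm} and read off the resulting \innerDist~from Corollary \ref{InnerDist of algo}. Concretely, I would take the row and column increments to be $r=c=\floor{\frac{n-1}{2}}$ and the band and stack offsets to be $\alpha=\beta=1$, a single choice that I expect to work uniformly across all $n\ge 3$.

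First I would check that these parameters are admissible. Algorithm \ref{generalized algorithm} requires $1\le r,c\le n-1$, which holds since $1\le\floor{\frac{n-1}{2}}\le n-1$ for $n\ge 3$, together with $\gcd(\alpha,r)=1$ and $\gcd(\beta,c)=1$; both gcd conditions are automatic because $\alpha=\beta=1$. Consequently the theorem asserting that Algorithm \ref{generalized algorithm} always produces a Latin square guarantees the array is Latin, with no further case analysis needed for Latin-ness. I would note that whether $r$ (equivalently $c$) is coprime to $n$ only affects how many bands and stacks appear: when $\gcd(n,r)=1$ the floor terms vanish and the offsets are inert, whereas when $\gcd(n,r)>1$ the offsets genuinely shift successive bands and stacks.

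The core of the argument is the distance computation. By Corollary \ref{InnerDist of algo} the \innerDist~equals $\min\{\pm r\pmod{n},\ \pm c\pmod{n},\ \pm(r+1)\pmod{n},\ \pm(c+1)\pmod{n}\}$. Since $r=\floor{\frac{n-1}{2}}<\frac{n}{2}$, the term $\pm r\pmod{n}$ contributes exactly $\floor{\frac{n-1}{2}}$, so it suffices to show the remaining terms are no smaller, and because $r=c$ it is enough to control $\pm(r+1)\pmod{n}$. Here the parity of $n$ splits the analysis, and I expect the case $n\equiv 2\pmod 4$ to be the genuine obstacle. For odd $n$ one has $r+1=\frac{n+1}{2}$, whose modular distance is again $\frac{n-1}{2}$; for even $n$ one has $r+1=\frac{n}{2}$, whose distance is the maximal $\frac{n}{2}>\floor{\frac{n-1}{2}}$. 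The reason the offset $\alpha=\beta=1$ is exactly the right choice becomes visible in the hardest case $n\equiv 2\pmod 4$, where $r=\frac{n-2}{2}$ shares the factor $2$ with $n$ and so is not coprime to it: the offset of $1$ both repairs the needed $\gcd$ condition and, at every band or stack boundary, bumps the crossing distance from $\frac{n-2}{2}$ up to $\frac{n}{2}$, keeping every adjacent distance at or above the target. Verifying that this boundary distance never dips below $\floor{\frac{n-1}{2}}$ is the one place the proof must be careful; everything else reduces to the two routine modular evaluations above, after which the minimum is seen to equal $\floor{\frac{n-1}{2}}$, and combining this lower bound with Lemma \ref{upperbound} completes the proof.
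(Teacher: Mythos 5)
Your proposal is correct and follows essentially the same route as the paper: the upper bound is quoted from Lemma \ref{upperbound}, and the matching construction feeds $r=c=\floor{\frac{n-1}{2}}$ into Algorithm \ref{generalized algorithm} and reads the \innerDist{} off Corollary \ref{InnerDist of algo}, with the parity split handling the boundary term $r+1\equiv\frac{n}{2}$ exactly as in the paper's even case. The only (immaterial) difference is that you use $\alpha=\beta=1$ uniformly, whereas the paper takes $\alpha=\beta=n$ for odd $n$; since $\gcd\left(n,\frac{n-1}{2}\right)=1$ there the floor terms vanish and either choice works.
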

\begin{proof}
Recall, by Lemma \ref{upperbound}, $\MmD(L_n)\leq \left\lfloor\frac{n-1}{2}\right\rfloor.$  We now construct Latin squares with  \innerDist~$\left\lfloor\frac{n-1}{2}\right\rfloor$ for any order $n$ using Algorithm \ref{generalized algorithm} . 

For odd $n$, using Algorithm \ref{generalized algorithm}, let $c=r=\frac{n-1}{2}$. By Theorem \ref{gcd stuff}, $\gcd(n,c) = \gcd(n,r) = 1$, and so let $\alpha=\beta=n$. Then, Algorithm \ref{generalized algorithm} creates a Latin square with \innerDist~$\frac{n-1}{2}$, the minimum of horizontal and vertical increments. 

For even $n$, we do not always have $\gcd(\frac{n-2}{2},n)=1$, but since $\frac{n}{2}$ is a distance larger than our bound of $\frac{n-2}{2}$, we can use $\frac{n}{2}$ for our increments along the bands and stacks. Therefore, using Algorithm \ref{generalized algorithm}, let $c = r = \frac{n-2}{2}$ along the columns with $\alpha = \beta=1$, then the minimum \innerDist~will be $\frac{n-2}{2}$, as all distances are either $\frac{n-2}{2}$ or $\frac{n}{2}$.
\end{proof}
\subsection{Examples of Squares with Maximum Inner Distance}

\begin{figure} [H]
    \centering
    \begin{tikzpicture}[scale=.6]
\draw(0,0)grid(6,6); 
\draw[step=6,ultra thick](0,0)grid(6,6);
\foreach\x[count=\i] in{1, 3, 5, 2, 4, 6}{\node at(\i-0.5,5.5){$\x$};};
\foreach\x[count=\i] in{5, 1, 3, 6, 2, 4}{\node at(\i-0.5,4.5){$\x$};};
\foreach\x[count=\i] in{3, 5, 1, 4, 6, 2}{\node at(\i-0.5,3.5){$\x$};};
\foreach\x[count=\i] in{6, 2, 4, 1, 3, 5}{\node at(\i-0.5,2.5){$\x$};};
\foreach\x[count=\i] in{4, 6, 2, 5, 1, 3}{\node at(\i-0.5,1.5){$\x$};};
\foreach\x[count=\i] in{2, 4, 6, 3, 5, 1}{\node at(\i-0.5,0.5){$\x$};};
\end{tikzpicture}
    \caption{$6 \times 6$ Non-circulant Latin square with inner distance of 2 with variables in Algorithm \ref{generalized algorithm} being $r=4, c=2, R=3, C=3, \alpha=-1$, and $\beta =1$.}
    \label{fig:LS6}
\end{figure}

\begin{figure} [H]
    \centering
    \begin{tikzpicture}[scale=.6]
\draw(0,0)grid(9,9);
\draw[step=9,ultra thick](0,0)grid(9,9);
\foreach\x[count=\i] in{ 1, 5, 9, 4, 8, 3, 7, 2, 6}{\node at(\i-0.5,8.5){$\x$};};
\foreach\x[count=\i] in{6, 1, 5, 9, 4, 8, 3, 7, 2}{\node at(\i-0.5,7.5){$\x$};};
\foreach\x[count=\i] in{2, 6, 1, 5, 9, 4, 8, 3, 7}{\node at(\i-0.5,6.5){$\x$};};
\foreach\x[count=\i] in{7, 2, 6, 1, 5, 9, 4, 8, 3}{\node at(\i-0.5,5.5){$\x$};};
\foreach\x[count=\i] in{3, 7, 2, 6, 1, 5, 9, 4, 8}{\node at(\i-0.5,4.5){$\x$};};
\foreach\x[count=\i] in{8, 3, 7, 2, 6, 1, 5, 9, 4}{\node at(\i-0.5,3.5){$\x$};};
\foreach\x[count=\i] in{4, 8, 3, 7, 2, 6, 1, 5, 9}{\node at(\i-0.5,2.5){$\x$};};
\foreach\x[count=\i] in{9, 4, 8, 3, 7, 2, 6, 1, 5}{\node at(\i-0.5,1.5){$\x$};};
\foreach\x[count=\i] in{5, 9, 4, 8, 3, 7, 2, 6, 1}{\node at(\i-0.5,0.5){$\x$};};
\end{tikzpicture}
    \caption{By Algorithm \ref{generalized algorithm} with $r=5, c=4,$ so that $R=9, C=9, \alpha=9$, and $\beta =9$, this is a $9 \times 9$ Latin square with inner distance 4.}
    \label{fig:LS9}
\end{figure}

\begin{figure} [H]
    \centering
    \begin{tikzpicture}[scale=.6]
\draw(0,0)grid(10,10);
\draw[step=10,ultra thick](0,0)grid(10,10);
\foreach\x[count=\i] in{1, 5, 9, 3, 7,  2,  6,  10,  4,  8}{\node at(\i-0.5,9.5){$\x$};};
\foreach\x[count=\i] in{6,  10,  4,  8,  2,  7,  1, 5, 9, 3}{\node at(\i-0.5,8.5){$\x$};};
\foreach\x[count=\i] in{2, 6, 10, 4, 8, 3, 7, 1, 5, 9}{\node at(\i-0.5,7.5){$\x$};};
\foreach\x[count=\i] in{7, 1, 5, 9, 3, 8, 2, 6, 10, 4}{\node at(\i-0.5,6.5){$\x$};};
\foreach\x[count=\i] in{3, 7, 1, 5, 9, 4, 8, 2, 6, 10}{\node at(\i-0.5,5.5){$\x$};};
\foreach\x[count=\i] in{8, 2, 6, 10, 4, 9, 3, 7, 1, 5}{\node at(\i-0.5,4.5){$\x$};};
\foreach\x[count=\i] in{4, 8, 2, 6, 10, 5, 9, 3, 7, 1}{\node at(\i-0.5,3.5){$\x$};};
\foreach\x[count=\i] in{9, 3, 7, 1, 5, 10, 4, 8, 2, 6}{\node at(\i-0.5,2.5){$\x$};};
\foreach\x[count=\i] in{5, 9, 3, 7, 1, 6, 10, 4, 8, 2}{\node at(\i-0.5,1.5){$\x$};};
\foreach\x[count=\i] in{10, 4, 8, 2, 6, 1, 5, 9, 3, 7}{\node at(\i-0.5,0.5){$\x$};};
\end{tikzpicture}
    \caption{By Algorithm \ref{generalized algorithm} with $r=5, c=4, R=2, C=5, \alpha=1$, and $\beta =1$, $10 \times 10$ Latin square with inner distance 4.}
    \label{fig:LS10}
\end{figure}

\subsection{Intermediate Results and Applications}
Two Latin squares $M$ and $N$ are \textbf{isotopic} if a sequence of row, column, and symbol permutations can transform $M$ into $N$. The \textbf{isotopy class} of a Latin square $N$ of order $n$ is the set of Latin squares $M$ of order $n$ that are isotopic to $N$.

\begin{lemma}\label{shift by k isotopic}
All shift-by-$k$ squares of order $n$ are isotopic.
\end{lemma}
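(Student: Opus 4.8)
The plan is to show that every shift-by-$k$ square, whatever the value of $k$ (with $\gcd(k,n)=1$) and whatever its entries, can be carried by row, column, and symbol permutations to a single fixed representative, namely the circulant square of order $n$. Since isotopy is an equivalence relation (sequences of permutations compose and invert), exhibiting one common isotopy target for all shift-by-$k$ squares immediately gives that they are all isotopic to one another.

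First I would record a closed form for the entries. Working with row and column indices in $\mathbb{Z}_n$, the defining relation $m_{i,j}=m_{i+1,\,j+k}$ can be iterated to give $m_{i,j}=g(j-ik \bmod n)$, where $g$ denotes the first row viewed as a map from column indices to symbols. The map $g$ is a bijection since the first row is Latin, and conversely any bijection $g$ together with this formula produces a Latin square exactly when $\gcd(k,n)=1$: each row is a permutation of the first, and by Theorem \ref{coprime increment theorem} the column index $j-ik$ runs over all of $\mathbb{Z}_n$ as $i$ does, so every column is Latin as well. This characterizes the shift-by-$k$ squares as precisely the arrays of the form $m_{i,j}=g(j-ik)$.

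Next I would normalize in two moves. Applying the symbol permutation $g^{-1}$, after identifying the symbol set with $\mathbb{Z}_n$, replaces the entries by $m_{i,j}=j-ik \bmod n$, removing all dependence on the first row. Because $\gcd(k,n)=1$, the element $k$ is invertible modulo $n$; permuting the rows by $\rho(i)=k^{-1}i \bmod n$ --- a genuine bijection of $\mathbb{Z}_n$ precisely because $k^{-1}$ exists --- sends the array to $m_{i,j}=j-i \bmod n$, which is the circulant square. Thus an arbitrary shift-by-$k$ square reaches the circulant square through one symbol permutation followed by one row permutation.

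The main obstacle is bookkeeping rather than ideas: one must iterate the shift relation consistently around the torus $\mathbb{Z}_n \times \mathbb{Z}_n$ to extract the closed form, and then confirm that the reindexing $\rho(i)=k^{-1}i$ is an honest permutation of the rows. This last point is exactly where the hypothesis $\gcd(k,n)=1$ is indispensable: without invertibility of $k$ the map $\rho$ would collapse distinct rows, and the reduction to a single value of $k$ would fail. Once these two verifications are in place, transitivity of isotopy finishes the argument.
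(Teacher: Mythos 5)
Your proof is correct and takes essentially the same route as the paper's: a symbol permutation to normalize the first row, followed by a row permutation, landing every shift-by-$k$ square on one fixed circulant representative, with transitivity of isotopy finishing the argument. The only difference is presentational --- you make the closed form $m_{i,j}=g(j-ik \bmod n)$ and the row permutation $\rho(i)=k^{-1}i$ explicit, whereas the paper describes the same permutation combinatorially (move the row with a $1$ in column $i$ to row $i$).
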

\begin{proof}
The isotopy class of a Latin square is invariant under symbol permutation, so we may assume the top row is $(1,2,\dots, n)$. A shift-by-$k$ square takes the first row and shifts the values down 1 cell and to the right by $k$ cells, hence the order of each row is the same, only starting at a different value. Then for each $i$, place the row that contains a 1 in the $i^{\rm th}$ column into the $i^{\rm th}$ row. The resulting array is a circulant Latin square with first row $(1,\dots, n)$. We have shown that any shift-by-$k$ Latin square of order $n$ can be symbol and row permuted into this square, therefore they belong to the same isotopic class.
\end{proof}

\begin{lemma}\label{odd squares lemma}
There are two ways to arrange the first row and two ways to arrange the first column, up to symbol permutation, of an odd ordered Latin square that achieves the \MaxMinDist.
\end{lemma}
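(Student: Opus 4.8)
The plan is to read off exactly how a maximum inner distance constrains a single row or column, and then count the possibilities. The first move is to upgrade the inequality in Lemma \ref{upperbound} to an equality at \emph{every} edge. For odd $n$ the adjacent distance function never exceeds $\frac{n-1}{2}$, and this is simultaneously the upper bound on the inner distance; so if a square realizes $\MmD(L_n)=\frac{n-1}{2}$, then the minimum adjacent distance equals $\frac{n-1}{2}$, which forces every pair of adjacent cells to be at distance \emph{exactly} $\frac{n-1}{2}$. Writing the first row as $a_1,\dots,a_n$, this says each consecutive difference satisfies $a_{j+1}-a_j\equiv\pm\frac{n-1}{2}\pmod n$; equivalently every step is of one of two types, $+\frac{n-1}{2}$ or $+\frac{n+1}{2}$, these being the two residues at distance $\frac{n-1}{2}$ from $0$.

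The key step is to rule out mixing the two step types. I would use the identity $\frac{n-1}{2}+\frac{n+1}{2}=n\equiv 0\pmod n$: a $+\frac{n-1}{2}$ step immediately followed (in either order) by a $+\frac{n+1}{2}$ step returns to the starting value, so $a_{j+2}\equiv a_j$, contradicting that the row is Latin. Hence no two consecutive steps differ in type, and chaining this along the row forces all $n-1$ steps to be equal. Therefore the first row is an arithmetic progression modulo $n$ with common difference either $\frac{n-1}{2}$ or $\frac{n+1}{2}$.

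To finish, I would normalize using the stated ``up to symbol permutation,'' fixing the leading entry $a_1$. By Theorem \ref{gcd stuff} (taking $a=1$, $b=n$) we have $\gcd(n,\frac{n-1}{2})=1$, and since $\frac{n+1}{2}\equiv-\frac{n-1}{2}\pmod n$ the same holds for $\frac{n+1}{2}$; so Theorem \ref{coprime increment theorem} confirms that each of the two progressions is genuinely Latin. As $\frac{n-1}{2}\ne\frac{n+1}{2}$, the two progressions are distinct, giving exactly two arrangements of the first row. Because the columns of a maximum inner distance odd square likewise have all adjacent distances equal to $\frac{n-1}{2}$, the identical argument applied column-wise yields exactly two arrangements of the first column.

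I expect the main obstacle to be the no-mixing step, and I would foreground the algebraic reason $\frac{n-1}{2}+\frac{n+1}{2}\equiv 0\pmod n$, which is precisely the feature of an odd modulus that forbids alternating steps; everything else is bookkeeping. The one subtlety to state carefully is the opening reduction: that achieving the \emph{maximum} inner distance forces equality at every single edge, not merely somewhere, which relies on $\frac{n-1}{2}$ being the global cap on the adjacent distance function when $n$ is odd.
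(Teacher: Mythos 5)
Your proposal is correct and follows essentially the same route as the paper: both reduce to the observation that every adjacent pair must be at distance exactly $\frac{n-1}{2}$, then note that two consecutive steps of opposite sign would force a repeated symbol (your cancellation identity $\frac{n-1}{2}+\frac{n+1}{2}\equiv 0\pmod n$ is exactly the paper's inductive step that $u_k\neq u_{k-1}-\frac{n-1}{2}$ because $u_{k-2}$ already holds that value), and finally confirm via $\gcd\left(n,\frac{n-1}{2}\right)=1$ that each of the two resulting progressions is Latin.
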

\begin{proof}
Let $n$ be odd. For any given $x \in \{1,2,\dots, n\}$, if the square has the maximum \innerDist, then there are two integers that can be adjacent to $x$, namely $x\pm \frac{n-1}{2}$ (see the discussion in the proof of Lemma \ref{upperbound}).

Assume we have an ordered set $(1,u_2,\dots, u_n)$. This may represent the first row or first column of a Latin square. Note that we only have two choices for $u_2,$ namely the integers $1\pm \frac{n-1}{2}$. We will show that choosing $u_2 = 1+\frac{n-1}{2}$ fills the rest of the square uniquely. The second case is identical.

Let $u_2= 1+ \frac{n-1}{2}$. Then $u_3$ has a unique symbol that has not been used, namely $(1 + \frac{n-1}{2})+\frac{n-1}{2}$.

Assume that for all $2\leq i\leq k-1$, $u_{i}=u_{i-1}+\frac{n-1}{2}$ and that for each $i$, $u_{i}=u_{i-1}+\frac{n-1}{2}$ was the only choice for $u_i$.
Since $u_{k-1}=u_{k-2}+\frac{n-1}{2}$, then ${u_k}$ cannot be $u_{k-1}-\frac{n-1}{2}$, as $u_{k-2}$ takes this value. Hence $u_k=u_{k-1}+\frac{n-1}{2}$ is the only option for $u_k$.

Because $\gcd(\frac{n-1}{2},n)=1$ for odd $n$, this ordered set is Latin, by Theorem \ref{modular divides stuff}. Therefore if $u_2=1 + \frac{n-1}{2}$, the rest of the list is determined by adding $\frac{n-1}{2}$ along the row, and similarly the list is determined by subtracting this amount when $u_2 = 1-\frac{n-1}{2}$. Then, up to symbol permutation of the first symbol, there are 2 ways to fill a row or column of an odd ordered square such that the Latin square has \MaxMinDist.
\end{proof}

\begin{theorem}\label{number of odd MmD squares}
For odd $n$, there are only $4n$ Latin squares with \innerDist~$\frac{n-1}{2}$. These squares are all circulant and thus in the same isotopic class.
\end{theorem}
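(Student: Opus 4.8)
The plan is to pin down the exact structure forced by maximality, then count. Set $d=\frac{n-1}{2}$, and note $\gcd(d,n)=1$ by Theorem \ref{gcd stuff} (taking $a=1$, $b=n$). Since $n$ is odd, $d$ is the largest possible distance between two symbols, so an inner distance of $d$ forces \emph{every} adjacent pair to have distance exactly $d$; equivalently, consecutive entries along any row or column differ by $\pm d \pmod n$. Rerunning the forcing argument from the proof of Lemma \ref{odd squares lemma} on an arbitrary row (resp.\ column) shows the sign cannot switch partway, since reversing direction would repeat the previous entry: row $i$ is an arithmetic progression with common difference $\sigma_i d$ for some $\sigma_i\in\{\pm1\}$, and column $j$ has common difference $\tau_j d$ for some $\tau_j\in\{\pm1\}$.

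Next I would show that all the $\sigma_i$ agree and all the $\tau_j$ agree. Reaching the corner $(i+1,j+1)$ of any unit square two ways --- right then down versus down then right --- gives $\sigma_i d + \tau_{j+1} d \equiv \tau_j d + \sigma_{i+1} d \pmod n$; cancelling $d$ (legitimate since $\gcd(d,n)=1$) yields $\sigma_i-\sigma_{i+1}\equiv \tau_j-\tau_{j+1}\pmod n$ for all $i,j$. The left side depends only on $i$ and the right only on $j$, so both equal a common residue $\delta$. If $\delta\equiv\pm2$, then $\sigma_{i+1}=\sigma_i\mp2$ would push the sign outside $\{\pm1\}$ within two steps, a contradiction; hence $\delta=0$, so $\sigma_i\equiv\sigma$ and $\tau_j\equiv\tau$ are constant.

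With the directions uniform, the entire square is $m_{i,j}=m_{1,1}+\tau(i-1)d+\sigma(j-1)d\pmod n$, which is exactly Algorithm \ref{generalized algorithm} with $r=\tau d$, $c=\sigma d$, $\alpha=\beta=n$; by Corollary \ref{InnerDist of algo} its inner distance is indeed $d$, so every such choice genuinely produces a maximal square. Such a square is determined by the triple $(m_{1,1},\sigma,\tau)$, and distinct triples give distinct squares: $m_{1,1}$ is read off the corner, $\sigma$ from $m_{1,2}-m_{1,1}$, and $\tau$ from $m_{2,1}-m_{1,1}$ (again using $\gcd(d,n)=1$ to separate $\pm d$). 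Counting $n$ corner values against the four sign choices yields exactly $4n$ squares. Finally, requiring $m_{i+1,j+k}=m_{i,j}$ forces $\tau+\sigma k\equiv0\pmod n$, i.e.\ $k=-\sigma\tau\in\{1,-1\}$, so each square is a shift-by-$k$ square (circulant when $k=1$, back circulant when $k=-1$); by Lemma \ref{shift by k isotopic} they all lie in a single isotopy class.

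The crux is the uniform-direction step: ruling out mixed row/column orientations. This is precisely where oddness (via $\gcd(d,n)=1$) does the work, and the only delicate point is discarding the $\delta\equiv\pm2$ case, which must be checked to fail even for small $n$ such as $n=3$, where $4n=12$ happens to equal the total number of order-$3$ Latin squares.
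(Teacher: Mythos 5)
Your proof is correct, and it rests on the same skeleton as the paper's: for odd $n$ every adjacent difference in a maximal square must be $\pm\frac{n-1}{2}$, the forcing argument of Lemma \ref{odd squares lemma} makes each row and column an arithmetic progression with that common difference, and the real work is showing that the sign of the increment cannot vary from row to row or column to column. Where you differ is in that last step. The paper argues locally: it examines the top $3\times 3$ corner, shows the cell $(2,2)$ is forced because the wrong choice produces a vertical difference of $3\mu\equiv\frac{n-3}{2}<\mu$, and then asserts that the pattern propagates to the rest of the square. You instead introduce sign sequences $\sigma_i,\tau_j$, derive the two-path identity $\sigma_i-\sigma_{i+1}\equiv\tau_j-\tau_{j+1}\pmod{n}$ around every unit cell, and use separation of variables together with the fact that $0$, $2$, and $-2$ are pairwise incongruent modulo an odd $n\geq 3$ to conclude that both sequences are constant. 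This is a genuinely cleaner route through the same bottleneck: it makes rigorous the propagation that the paper leaves implicit (``the rest of the square must be filled in this manner as well''), and it also supplies two details the paper takes for granted, namely that the $4n$ parameter choices $(m_{1,1},\sigma,\tau)$ yield pairwise distinct squares and that each of them actually attains inner distance $\frac{n-1}{2}$ (via Corollary \ref{InnerDist of algo}). Your closing identification of each square as shift-by-$(-\sigma\tau)$, hence circulant or back circulant and all in one isotopy class by Lemma \ref{shift by k isotopic}, matches the paper's conclusion and is in fact more precise about which of the $4n$ squares are circulant versus back circulant.
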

\begin{proof}
For $n=3$, there are 12 Latin squares using the symbols $\{1, 2,3\}$, all with distance 1, since this is the theoretical maximum and minimum. Therefore the theorem trivially holds true for $n=3$. Assume $n>3$.

We have $n$ choices for the top left cell. Assume without loss of generality $m_{1,1}=1$. 

We have two ways to fill row 1 and column 1. We claim that if we add or subtract $\frac{n-1}{2}$ along the first row or column, then we must do the same along every row or column.
Consider the top $3\times 3$ region of the resulting array (recall $n\geq 3$ for this distance to hold). Let $\mu$ denote the \MaxMinDist~of an odd order $n$ Latin square, that is $\mu = \frac{n-1}{2}$. Note then that $2\mu \equiv -1$ and $3\mu \equiv \frac{n-3}{2} \pmod{n}$.

\begin{figure}[H]
    \centering
        \begin{tikzpicture}[scale=1.1]
\draw(0,0)grid(4,4); 
\draw[step=4,ultra thick](0,0)grid(4,4);
\foreach\x[count=\i] in{1, 1 + \mu, 1 + 2\mu, \dots}{\node at(\i-0.5,3.5){$\x$};};
\foreach\x[count=\i] in{1+\mu, x, y, \dots}{\node at(\i-0.5,2.5){$\x$};};
\foreach\x[count=\i] in{1+2\mu, z, \ddots }{\node at(\i-0.5,1.5){$\x$};};
\foreach\x[count=\i] in{\vdots, \vdots }{\node at(\i-0.5,0.5){$\x$};};
\end{tikzpicture}
    \caption{The top $3\times 3$ region of the square. Note that $x$ determines the values of the entire Latin square.}
    \label{2 by 3 region}
\end{figure}

Here we choose to add $\mu$ along rows and columns, the other 3 cases are identical. To maintain the \MaxMinDist~of $\mu$, this increment must be constant along an entire first row and column, by Lemma \ref{odd squares lemma}, hence why the values in cells $(3,1)$ and $(1,3)$ are pre-determined. 

Note that cell $(2,2)$ has two possibilities: $1+\mu \pm\mu$, since these are the two neighbors that $1+ \mu$ can have with \MaxMinDist.

If $x = 1$, then $y = 1-\mu$ by Lemma \ref{odd squares lemma}. Then $m_{1,3} - m_{2,3} = (1+2\mu) - (1-\mu) = 3\mu \equiv \frac{n-3}{2} < \mu$. Therefore, this square would not have \MaxMinDist.
Therefore $x = 1+2\mu$, and so we increment by $+\mu$ along rows 1 and 2. Since we incremented by $\mu$ vertically along every column, the rest of the square must be filled in this manner as well. Then we have constructed $n$ Latin squares with \MaxMinDist, pertaining to our choice of symbol in the top left. In a similar fashion, we may have chosen to increment by $- \mu$ vertically or horizontally, summing up to $4n$ total squares of \MaxMinDist. 
\end{proof}

The following theorem suggests that most Latin squares with \MaxMinDist~are cyclic in nature. 

\begin{theorem}
Any Latin square generated by Algorithm \ref{generalized algorithm} is isotopic to a circulant Latin square.
\end{theorem}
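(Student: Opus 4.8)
The plan is to exploit the fact that the defining expression of Algorithm \ref{generalized algorithm} separates additively into a part depending only on the row and a part depending only on the column. Concretely, writing
\[ f(i) = (i-1)r + \alpha\floor{\frac{i-1}{R}}, \qquad g(j) = (j-1)c + \beta\floor{\frac{j-1}{C}}, \]
the entry becomes $m_{i,j} \equiv 1 + f(i) + g(j) \pmod{n}$. Thus every entry is a constant plus a function of $i$ alone plus a function of $j$ alone; this ``separation of variables'' is the structural feature that forces the square, after relabeling rows and columns, to coincide with a circulant square.

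First I would record that $f$ and $g$ are bijections modulo $n$. Since the array is Latin, for any fixed column $j$ the entries $m_{i,j}$ are distinct as $i$ ranges over $\{1,\dots,n\}$; subtracting the common term $1+g(j)$ shows the residues $f(1),\dots,f(n)$ are pairwise distinct modulo $n$, hence a permutation of $\{0,1,\dots,n-1\}$. The identical argument along a fixed row shows $g$ is a bijection modulo $n$ as well.

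Next I would use these bijections to define the isotopy. Because $f$ attains every residue exactly once, there is a unique row permutation $\pi$ with $f(\pi(i)) \equiv i-1 \pmod{n}$, and likewise a unique column permutation $\tau$ with $g(\tau(j)) \equiv j-1 \pmod{n}$. Permuting the rows by $\pi$ and the columns by $\tau$ is an isotopy (using no symbol permutation), and the resulting square has entry $1 + (i-1) + (j-1) \pmod{n}$ in cell $(i,j)$, which is precisely the back circulant Latin square, i.e.\ a shift-by-$(-1)$ square.

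Finally I would invoke Lemma \ref{shift by k isotopic}: the back circulant square is isotopic to the circulant square, so by transitivity of isotopy the square produced by Algorithm \ref{generalized algorithm} is isotopic to a circulant Latin square. I expect the only genuine subtlety, and hence the step to write most carefully, to be the bijectivity of $f$ and $g$ modulo $n$, since this is exactly what guarantees that $\pi$ and $\tau$ exist and are honest permutations. Everything else is an immediate consequence of the additive separation, which itself is read directly off the form of the algorithm, so the argument is short once the separation is noticed.
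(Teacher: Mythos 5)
Your proof is correct, but it takes a genuinely different route from the paper's. The paper argues by cases on $\gcd(n,c)$ and $\gcd(n,r)$: when one of them is $1$, each row (or column) is an arithmetic progression with common difference $c$ (or $r$), and an explicit row permutation lines these progressions up into a back-circulant square; when neither is $1$, it first permutes columns to make the top row $(1,2,\dots,n)$ and then argues each row consists of consecutive residues, which is the more delicate case. You instead exploit the additive separability $m_{i,j}\equiv 1+f(i)+g(j)\pmod{n}$ with $f(i)=(i-1)r+\alpha\floor{\frac{i-1}{R}}$ and $g(j)=(j-1)c+\beta\floor{\frac{j-1}{C}}$, deduce from the already-proven Latin property that $f$ and $g$ are bijections modulo $n$, and then relabel rows and columns by the inverse bijections to land exactly on the back-circulant square, finishing with Lemma \ref{shift by k isotopic}. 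Your argument is uniform (no case split), and it actually proves the stronger statement that \emph{any} Latin square whose entries decompose as a constant plus a function of the row plus a function of the column is isotopic to a circulant; what it gives up is the explicit description of the permutations in terms of $r$, $c$ and the band/stack structure that the paper's computation provides. The one step you flagged as needing care---bijectivity of $f$ and $g$---is indeed the crux, and your derivation of it from the Latin property of a single column and a single row is sound.
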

\begin{proof}
Suppose without loss of generality that $\gcd(n,c) = 1$. Then row $i$ takes values $(m_{i, 1}, m_{i,1}+c, \dots, m_{i,1}+(n-1)c)$ (with entries modulo $n$). So every row is in the same order, beginning with a different integer. Therefore if we permute the rows such that the leftmost entry in row $i$ is $m_{1, 1} + (i-1)c$, the resulting square is a back-circulant Latin square. To see this, note that for some $m_{i,j}$ with say $i>1$ and $j < n$, $m_{i,j} = m_{i,1} + (j-1)c = m_{i-1, 1} + jc$, and similarly $m_{i-1, j+1} = m_{i-1, 1} + jc$. Here we use the fact that we increment by $c$ horizontally and along the first column after we permute the rows. So $m_{i,j} = m_{i-1, j+1}$, the exact condition for back-circulant Latin squares. If $i = 1$ we can replace $i-1$ with $n$ and likewise if $j = n$ replace $j+1$ with 1. 

The proof is similar if $\gcd(n,r)=1$, so if either $\gcd(n,r)$ or $\gcd(n,c)$ is 1 then the theorem holds. We now work with the case where $\gcd(n,r)\neq 1$ and $\gcd(n,c) \neq 1$. 

Permute the columns so that $m_{1,j} = j$, so that the top row in order is $(1,2,\dots, n)$. Since the order of entries in a column are left un-modified by column permutations, $m_{i, j} = m_{1,j} + (i-1)r + B_i$, where $B_i$ is the band $m_{i,j}$ lands on (here we take $\alpha = \beta = 1$, otherwise we replace $B_i$ with the expression in Algorithm \ref{generalized algorithm}). Therefore $m_{i,j} = j + (i-1)r + B_i$, so row $i$ is, in order, $(m_{i, 1}, m_{i, 1} + 1, \dots, n, 1, \dots, m_{i,1} - 1)$. Permute the rows so that cell $m_{i, 1} = i$, and the resulting square is a circulant Latin square, in fact it is the circulant Latin square with first column and first row being the integers $\{1,2,\dots, n\}$ in order.
\end{proof}

In particular, all $4n$ squares of maximum inner distance for odd $n$ are created by Algorithm \ref{generalized algorithm}, so they are isotopic.

We now shift our focus to other variants of Latin squares. The results in this section can be applied as an upper bound for any other type of Latin squares, though adding certain restrictions such as the rules of Sudoku increases the difficulty both in creating squares and proving a \MaxMinDist.

\section{Pandiagonal Latin Squares}
Throughout the paper we discussed having each integer exactly once in every column and row. Now we briefly consider diagonals. A \term{back diagonal} is a set of cells $(i,j)$ such that $i+j \equiv d_b \pmod n $ is a constant. The set defined by $d_b = n+1$ is sometimes called the minor diagonal. A \term{forward diagonal} is a set of cells with $i-j \equiv d_f \pmod n$ as a constant. The set defined by $d_f = 0$ is sometimes called the major or main diagonal. We will use the term \textit{diagonal} to speak about both forward and back diagonals.

A \term{pandiagonal Latin square} $P$ is a Latin square with order $n$ where every diagonal is Latin. That is, the set of symbols on any forward or back diagonal is $\{1,2,\dots, n\}$. It is worth noting that these diagonals are an example of a transversal, which is a set $T$ of $n$ cells such that each column and row contains one cell from $T$, and the symbol set of $T$ is $\{1,2,\dots, n\}$. We use $P_n$ to denote the set of all pandiagonal Latin squares of order $n$.

The topic of existence of pandiagonal Latin squares was studied in \cite{knut_vik_designs}, and it was shown that a pandiagonal Latin square of order $n$ exists if and only if $n \equiv 1, 5 \pmod{6}$. This is equivalent to stating that 2 and 3 do not divide $n$. Pandiagonal Latin squares are also known as Knut-Vik designs, as in \cite{knut_vik_designs}.

\begin{theorem}
If $n \equiv 1,5 \pmod{6}$, then $\MmD(P_n)=\frac{n-3}{2}$.
\end{theorem}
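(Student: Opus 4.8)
The plan is to prove the equality by matching bounds, leaning heavily on the classification already obtained in Theorem \ref{number of odd MmD squares}. Since $n \equiv 1,5 \pmod 6$ forces $\gcd(n,6)=1$, the order $n$ is odd and coprime to $3$; in particular $\MmD(L_n) = \floor{\frac{n-1}{2}} = \frac{n-1}{2}$, and every inner distance in sight is an integer. Write $\mu := \frac{n-1}{2}$.

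For the upper bound I would show that no pandiagonal square attains $\mu$. By Theorem \ref{number of odd MmD squares} the only Latin squares of \innerDist~$\mu$ are the $4n$ circulant squares $m_{i,j} = m_{1,1} + (i-1)r + (j-1)c \pmod n$ with each of $r,c$ equal to $\pm\mu$. Stepping along a forward diagonal (keeping $i-j$ fixed) changes the entry by $r+c$, and along a back diagonal (keeping $i+j$ fixed) by $r-c$. Since each of $r,c$ equals $\pm\mu$, either $r=c$, forcing $r-c=0$, or $r=-c$, forcing $r+c=0$; in either case one whole family of diagonals is constant and hence not Latin. Thus none of these squares is pandiagonal, giving $\MmD(P_n) < \mu$, and because inner distances are integers this yields $\MmD(P_n) \le \frac{n-3}{2}$.

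For the lower bound I would exhibit a pandiagonal square of \innerDist~exactly $\frac{n-3}{2}$ using Algorithm \ref{generalized algorithm} with $r=\frac{n-1}{2}$, $c=\frac{n-3}{2}$, and $\alpha=\beta=n$. A circulant square with these increments is pandiagonal precisely when $r$, $c$, $r+c$, and $r-c$ are all coprime to $n$, so I would verify each: $\gcd(\frac{n-1}{2},n)=1$ since $2r \equiv -1 \pmod n$; $\gcd(\frac{n-3}{2},n)=1$ since $2c \equiv -3 \pmod n$ and $3 \nmid n$; $r+c = n-2 \equiv -2 \pmod n$ with $\gcd(2,n)=1$ as $n$ is odd; and $r-c = 1$. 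Rows and columns are Latin because $\gcd(r,n)=\gcd(c,n)=1$, and the diagonal increments $r\pm c$ being units make every diagonal Latin by Theorem \ref{coprime increment theorem}. Finally, Corollary \ref{InnerDist of algo} gives the \innerDist~as the minimum of $\pm r \pmod n$ and $\pm c \pmod n$, namely $\min\{\frac{n-1}{2},\frac{n-3}{2}\} = \frac{n-3}{2}$, completing the matching bound.

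The main obstacle is the upper bound, but it is essentially handed to us by Theorem \ref{number of odd MmD squares}: the real content is the short diagonal-increment computation showing that each extremal circulant pattern necessarily has a constant diagonal. The lower bound is then a routine check of four coprimality conditions, in which the hypothesis $3\nmid n$ is exactly what keeps $c=\frac{n-3}{2}$ coprime to $n$.
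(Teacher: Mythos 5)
Your proposal is correct and follows essentially the same route as the paper: the upper bound comes from the classification in Theorem \ref{number of odd MmD squares} together with the observation that the diagonal increments $r\pm c$ of an extremal circulant square must include $0$, and the lower bound comes from Algorithm \ref{generalized algorithm} with increments $\frac{n-1}{2}$ and $\pm\frac{n-3}{2}$ (the paper takes $c=-\frac{n-3}{2}$ where you take $+\frac{n-3}{2}$, which merely swaps the roles of the forward and back diagonals). Your explicit check that $\gcd\left(\frac{n-3}{2},n\right)=1$ via $2c\equiv -3\pmod n$ and $3\nmid n$ is a detail the paper leaves implicit, and is a welcome addition.
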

\begin{proof}
Let $P\in P_n$ be a pandiagonal Latin square of order $n$, for $n \equiv 1,5 \pmod{6}$. Since $n$ is odd, $P$ has inner distance at most $\frac{n-1}{2}$. From the proof of Theorem \ref{number of odd MmD squares}, the only way to construct a distance $\frac{n-1}{2}$ square is to use a circulant Latin square, which using Algorithm \ref{generalized algorithm} increments by $\pm \frac{n-1}{2}$ along columns and rows, i.e. $c\in\{\pm \frac{n-1}{2}\}$ and $r\in\{\pm \frac{n-1}{2}\}$. Now consider the main forward diagonal, with cells $(i,i)$. Then, for $1\leq i \leq n-1$, $(m_{i+1,i+1}-m_{i,i})\equiv c+r \pmod n$. Similarly, for the main back diagonal with cells $(i,n-i+1)$, we have for $1\leq i \leq n-1$, $(m_{i+1,n-(i+1)+1}-m_{i,n-i+1})\equiv c-r \pmod n$.
Then since $|c|=|r|$ for these squares, we will have that either the main forward or main back diagonal  contains just one symbol. Therefore there is no pandiagonal with \innerDist~$\frac{n-1}{2}$.

Using Algorithm \ref{generalized algorithm}, we can construct a pandiagonal Latin square with inner distance $\frac{n-3}{2}$, using $r= \frac{n-1}{2}$, $c=- \frac{n-3}{2}$, $\alpha=n$, and $\beta=n$. See Figure \ref{fig:P_11} for an example. Then, on the main forward diagonal, we have $(m_{i+1,i+1}-m_{i,i})\equiv 1 \pmod n$. Similarly, using the same orientation as on the main forward diagonal, the difference between entries for cells which are adjacent on any forward diagonal will be $1$. On the main back diagonal, we have $(m_{i+1,n-(i+1)+1}-m_{i,n-i+1})\equiv -2 \pmod n$. Similarly, using the same orientation as on the main back diagonal, the difference between entries for cells which are adjacent on any back diagonal will be $- 2$. Since $n$ is odd, we can see that the diagonals are always Latin by Theorem \ref{coprime increment theorem}, and thus the resulting Latin square is pandiagonal. By Corollary \ref{InnerDist of algo}, the distance of this pandiagonal Latin square is $\frac{n-3}{2}$. 
\end{proof}

\begin{figure} [H]
    \centering
    \begin{tikzpicture}[scale=.6]
\draw(0,0)grid(11,11);
\draw[step=11,ultra thick](0,0)grid(11,11);

\foreach\x[count=\i] in{1, 6, 11, 5, 10, 4, 9, 3, 8, 2, 7}{\node at(\i-0.5,10.5){$\x$};};
\foreach\x[count=\i] in{8, 2, 7, 1, 6, 11, 5, 10, 4, 9, 3}{\node at(\i-0.5,9.5){$\x$};};
\foreach\x[count=\i] in{4, 9, 3, 8, 2, 7, 1, 6, 11, 5, 10}{\node at(\i-0.5,8.5){$\x$};};
\foreach\x[count=\i] in{11, 5, 10, 4, 9, 3, 8, 2, 7, 1, 6}{\node at(\i-0.5,7.5){$\x$};};
\foreach\x[count=\i] in{7, 1, 6, 11, 5, 10, 4, 9, 3, 8, 2}{\node at(\i-0.5,6.5){$\x$};};
\foreach\x[count=\i] in{3, 8, 2, 7, 1, 6, 11, 5, 10, 4, 9}{\node at(\i-0.5,5.5){$\x$};};
\foreach\x[count=\i] in{10, 4, 9, 3, 8, 2, 7, 1, 6, 11, 5}{\node at(\i-0.5,4.5){$\x$};};
\foreach\x[count=\i] in{6, 11, 5, 10, 4, 9, 3, 8, 2, 7, 1}{\node at(\i-0.5,3.5){$\x$};};
\foreach\x[count=\i] in{2, 7, 1, 6, 11, 5, 10, 4, 9, 3, 8}{\node at(\i-0.5,2.5){$\x$};};
\foreach\x[count=\i] in{9, 3, 8, 2, 7, 1, 6, 11, 5, 10, 4}{\node at(\i-0.5,1.5){$\x$};};
\foreach\x[count=\i] in{5, 10, 4, 9, 3, 8, 2, 7, 1, 6, 11}{\node at(\i-0.5,0.5){$\x$};};
\end{tikzpicture}
    \caption{$11 \times 11$ pandiagonal Latin square with $\MmD(P_{11})= 4$.}
    \label{fig:P_11}
\end{figure}

\section{Sudoku Latin Squares}\label{Sudoku section}
Sudoku Latin squares are a popular type of Latin square and an active area of research  due to the popular game \textit{Sudoku}. The typical Sudoku puzzle that players must complete can be referred to as a $(3,3)$-Sudoku Latin square because players fill a $9 \times 9$ array into a Latin square with the added constraint that each $3\times 3$ \textit{block} also contains the integers 1 to 9. This now popular puzzle began in 1975 as a puzzle by Howard Garns which appeared in Dell Magazine called ``Number Places''\cite{Garns}. 

In general, for positive integers $a$ and $b,$ an \term{$(a,b)$-Sudoku Latin square} is a Latin square with order $n=ab$ where the $n \times n$ Latin square is tiled with $a \times b$ blocks (the intersections of bands of height $a$ and stacks of width $b$) such that each $a \times b$ block contains all $n$ symbols in $\{1,2,\dots, n\}$ exactly once \cite{book:Combinatorics}. We investigate the \MaxMinDist~of $(a,b)$-Sudoku Latin squares and find some general results as well as results for specific cases dependent on $a$ and $b$. As we shall see, the \MaxMinDist~depends now on $(a,b)$ rather than $n$, so we denote the maximum inner distance over all $(a,b)$-Sudoku Latin squares as $\MmD(L_{a,b})$.

\begin{lemma}
For any positive integers $a$ and $b, \MmD(L_{a,b})=\MmD(L_{b,a})$.
\end{lemma}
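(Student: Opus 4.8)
The plan is to establish a bijection between the set of $(a,b)$-Sudoku Latin squares and the set of $(b,a)$-Sudoku Latin squares that preserves the inner distance, which immediately forces the two maxima to coincide. The natural candidate for this bijection is transposition: given an $(a,b)$-Sudoku Latin square $L$, form its transpose $L^{T}$ by setting $m^{T}_{i,j} = m_{j,i}$. First I would verify that $L^{T}$ is itself a Latin square, which is immediate since transposition swaps the roles of rows and columns, and the Latin property is symmetric in rows and columns. I would then check the block constraint: a block of $L$ is an $a \times b$ region formed by the intersection of a band of $a$ rows and a stack of $b$ columns, and under transposition this becomes a $b \times a$ region formed by a band of $b$ rows and a stack of $a$ columns, which is exactly a block of a $(b,a)$-Sudoku Latin square. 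Since the block of $L$ contained all $n$ symbols exactly once, so does its transposed image, so $L^{T}$ is a genuine $(b,a)$-Sudoku Latin square.

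Next I would argue that transposition preserves the inner distance. The key observation is that two cells are adjacent (horizontally or vertically) in $L$ if and only if their transposed images are adjacent in $L^{T}$: a horizontal adjacency $(i,j_1),(i,j_2)$ with $|j_2-j_1|=1$ maps to the vertical adjacency $(j_1,i),(j_2,i)$, and vice versa. Because the adjacent distance $\min\{u-v \pmod{n}, v-u \pmod{n}\}$ depends only on the pair of symbols and not on the geometric orientation of the adjacency, the multiset of adjacent distances of $L$ equals the multiset of adjacent distances of $L^{T}$. In particular their minima agree, so the inner distance of $L$ equals the inner distance of $L^{T}$.

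Finally I would combine these facts. Transposition is an involution ($(L^{T})^{T} = L$), so it is a bijection from the set of $(a,b)$-Sudoku Latin squares onto the set of $(b,a)$-Sudoku Latin squares, and it carries each square to one of equal inner distance. A distance-preserving bijection between two finite sets forces their maximum distances to be equal, giving $\MmD(L_{a,b}) = \MmD(L_{b,a})$.

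I do not anticipate a genuine obstacle here; the statement is essentially a symmetry observation, and the only place demanding care is the bookkeeping in the block constraint, where one must confirm that the transpose of an $a \times b$ block tiling is exactly the $b \times a$ block tiling rather than some other partition. Writing out the index correspondence between bands and stacks under transposition should settle this cleanly.
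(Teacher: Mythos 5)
Your proposal is correct and matches the paper's proof, which also uses transposition and the observation that it preserves adjacency (and hence all adjacent distances) while exchanging $a\times b$ blocks for $b\times a$ blocks. You have simply spelled out the bijection and block-correspondence details that the paper leaves implicit.
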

\begin{proof}
Let $L$ be an $(a,b)$-Sudoku Latin square. Then $L^T$ is a $(b,a)$-Sudoku Latin square, where adjacent symbols, and hence distances, are preserved.
\end{proof}

Because of this, for the remainder of the paper we work with the convention that $a\leq b$. 

\begin{theorem}\label{(2,b) case}
If $b\geq 2$, $\MmD(L_{2,b})=b-1 = \frac{n-2}{2}$.
\end{theorem}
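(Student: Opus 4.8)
The plan is to prove the two inequalities separately, with the upper bound essentially free and all the work concentrated in an explicit construction. For the upper bound I note that every $(2,b)$-Sudoku Latin square is in particular a Latin square of order $n=2b$, so its \innerDist{} is at most $\MmD(L_n)$. By Theorem \ref{MainResult}, $\MmD(L_n)=\floor{\frac{n-1}{2}}=\floor{\frac{2b-1}{2}}=b-1$, which gives $\MmD(L_{2,b})\le b-1=\frac{n-2}{2}$ immediately. It therefore remains only to exhibit a single $(2,b)$-Sudoku Latin square whose \innerDist{} equals $b-1$.

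To build such a square I would feed Algorithm \ref{generalized algorithm} the parameters $r=b=\frac{n}{2}$, $c=b+1$, $\alpha=1$, and $\beta=-1$, so that $R=\frac{n}{\gcd(n,r)}=2$ and $C=\frac{n}{\gcd(n,c)}\in\{b,n\}$ depending on the parity of $b$. Since $\gcd(\alpha,r)=\gcd(1,b)=1$ and $\gcd(\beta,c)=\gcd(1,b+1)=1$, the theorem that Algorithm \ref{generalized algorithm} produces a Latin square guarantees the output is a genuine Latin square for every $b\ge 2$. Its \innerDist{} is then read off from Corollary \ref{InnerDist of algo}: the four relevant quantities are $r=b$ (distance $b$), $c=b+1$ (distance $b-1$), $r+\alpha=b+1$ (distance $b-1$), and $c+\beta=b$ (distance $b$), whose minimum is $b-1$, as desired.

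The genuinely new content, and the step I expect to be the main obstacle, is verifying the Sudoku constraint that each $2\times b$ block contains all $n$ symbols. Because $R=2$ aligns with the bands and $C\in\{b,n\}$ keeps the stack offset constant on each stack, both floor terms in Algorithm \ref{generalized algorithm} are constant on any fixed block; hence the entries of a block are, up to one additive constant, exactly the set $\{\,i'b+j'(b+1)\pmod{n}\;:\;i'\in\{0,1\},\ j'\in\{0,1,\dots,b-1\}\,\}$. I would show the map $(i',j')\mapsto i'b+j'(b+1)$ is injective into $\BZ_n$, hence bijective by counting the $2b=n$ pairs. Supposing $i_1b+j_1(b+1)\equiv i_2b+j_2(b+1)\pmod{2b}$ and writing $\delta=i_1-i_2\in\{-1,0,1\}$ and $\varepsilon=j_1-j_2$ with $|\varepsilon|\le b-1$, this rearranges to $(\delta+\varepsilon)b+\varepsilon\equiv 0\pmod{2b}$; reducing modulo $b$ forces $\varepsilon\equiv 0\pmod{b}$, so $\varepsilon=0$, whereupon $\delta b\equiv 0\pmod{2b}$ forces $\delta$ even, so $\delta=0$. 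Thus every block is Latin, the square is a valid $(2,b)$-Sudoku Latin square with \innerDist{} $b-1$, and combined with the upper bound this yields $\MmD(L_{2,b})=b-1=\frac{n-2}{2}$. The only delicate point is the parity split governing $C$ (and hence whether the $\beta$-offset ever actually fires inside the grid), but the argument above shows this affects neither the Latin property nor the block count, so no separate cases are truly needed.
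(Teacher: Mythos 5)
Your proposal is correct and follows essentially the same route as the paper: the upper bound from Theorem \ref{MainResult}, a construction via Algorithm \ref{generalized algorithm} with $r=\frac{n}{2}$ and a horizontal increment of adjacent distance $b-1$, and a modular-arithmetic check that each $2\times b$ block is Latin. Your choice $c=b+1\equiv-(b-1)\pmod{n}$ with $\beta=-1$ is just the mirror image of the paper's $c=\frac{n-2}{2}$, with the minor advantage that it handles both parities of $b$ uniformly where the paper splits into two cases with different values of $\beta$.
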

\begin{proof}
By Theorem \ref{MainResult}, $\MmD(L_{2,b}) \leq \left\lfloor\frac{2b-1}{2}\right\rfloor=\left\lfloor\frac{n-1}{2}\right\rfloor$. Since $n=2b$ is even,  $\left\lfloor\frac{n-1}{2}\right\rfloor=\frac{n-2}{2}$, and so $\MmD(L_{2,b})\leq\frac{n-2}{2}$.

We will now use Algorithm \ref{generalized algorithm} to construct a $2 \times b$ Latin square with inner distance $\frac{n-2}{2}$.

\textit{Case 1}: Let $b$ be odd.  In Algorithm \ref{generalized algorithm}, let $r = \frac{n}{2}, \alpha = 1, c = \frac{n-2}{2},$ and $\beta = 1$. The resulting square is a Latin square with inner distance $\frac{n-2}{2}$. It remains to show that the Latin square produced is an $(2,b)$-Sudoku Latin square.  

From Algorithm \ref{generalized algorithm}, the entry for cell $(i,j)$ would be 
\[m_{i,j} = 1 + (i-1)\frac{n}{2} + (j-1)\frac{n-2}{2} +\left\lfloor{\frac{i-1}{R}}\right\rfloor + \left\lfloor{\frac{j-1}{C}}\right\rfloor \pmod{n}.\]

Now suppose two entries, $m_{i_1,j_1}$ and $m_{i_2,j_2}$ in the same $2\times b$ block are the same. Since $m_{i_1,j_1}$ and $m_{i_2,j_2}$ are in the same block, they are in the same band, and $\left\lfloor{\frac{i_1-1}{R}}\right\rfloor=\left\lfloor{\frac{i_2-1}{R}}\right\rfloor$. Likewise, $m_{i_1,j_1}$ and $m_{i_2,j_2}$ are in the same stack, and so $\left\lfloor{\frac{j_1-1}{C}}\right\rfloor=\left\lfloor{\frac{j_2-1}{C}}\right\rfloor$. Setting the expressions for $m_{i_1,j_1}$ and $m_{i_2,j_2}$ equal and cancelling terms, we have:

\[ \frac{n}{2}i_1 +  \left(\frac{n-2}{2}\right)j_1 \equiv \frac{n}{2}i_2 +  \left(\frac{n-2}{2}\right)j_2\pmod{n}. \]

Rewriting in terms of $b$, we have 
\[ bi_1 +  (b-1)j_1 \equiv bi_2 +  (b-1)j_2\pmod{n}\iff b(i_1 - i_2) \equiv (b-1)(j_2-j_1)\pmod{n}. \]

By Theorem \ref{modular divides stuff}, $\gcd(b,n)\vert (b-1)(j_2-j_1)$. Since $\gcd(b,n)=\gcd(b,2b)=b$ and $\gcd(b, b-1)=1$, $b \vert (j_2-j_1)$. Since $m_{i_1,j_1}$ and $m_{i_2,j_2}$ are in the same block, then $0\leq |j_2-j_1| \leq b-1$. Therefore $j_2 = j_1$, and we now have that \[ b(i_1 - i_2) \equiv 0 \pmod{n}. \] Since $n=2b$, $2\vert (i_1 - i_2)$. However, since $m_{i_1,j_1}$ and $m_{i_2,j_2}$ are in the same block, $0\leq |i_2-i_1| \leq 1$ and so, $i_2 = i_1$. 

\textit{Case 2}: Let $b$ be even. Then use Algorithm \ref{generalized algorithm} with $r=\frac{n}{2}, \alpha = 1, c = \frac{n-2}{2},$ and $\beta = n$.
By a similar reasoning as Case 1, this produces a $(2,b)$-Sudoku Latin square with inner distance $b-1 = \frac{n-2}{2}$.
\end{proof}

\begin{lemma}\label{theoretical upper bound}
If $a, b \geq 3 $, then $\MmD(L_{a,b}) \leq \left \lfloor\frac{n-3}{2}\right \rfloor$.
\end{lemma}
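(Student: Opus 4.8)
The plan is to exploit the extra Latin constraint on each block, which is what distinguishes Sudoku squares from general Latin squares. The general bound of Theorem \ref{MainResult} gives only $\lfloor\frac{n-1}{2}\rfloor$, so the whole game is to shave off an additional unit of distance, and the idea is to locate a single cell whose \emph{four} orthogonal neighbors are all forced to carry distinct symbols, then count how many symbols can possibly be far from a fixed one.

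First I would set up the argument by contradiction-style bookkeeping: suppose $L$ is an $(a,b)$-Sudoku Latin square whose inner distance is $d$, so every pair of adjacent cells has distance at least $d$. Because $a,b\ge 3$, I can choose a block together with a cell $(i,j)$ that is strictly interior to that block, meaning $(i,j)$ is neither the top nor bottom row of its band, nor the leftmost nor rightmost column of its stack. Such a cell exists precisely because each block has at least three rows and three columns, and for it all four neighbors $(i\pm 1,j)$ and $(i,j\pm 1)$ lie inside the same $a\times b$ block.

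Next I would invoke that the block is Latin: the center symbol $x=m_{i,j}$ together with its four neighbors are five cells of a single block, hence five pairwise distinct symbols. In particular the four neighbor symbols are all distinct and all different from $x$, and since each neighbor is adjacent to $(i,j)$, each has distance at least $d$ from $x$. The counting step then fixes $x$ and tallies the symbols at distance at least $d$: the symbols at distance exactly $k$ are $x+k$ and $x-k\pmod n$, contributing two symbols for each $1\le k<\frac{n}{2}$ and (when $n$ is even) a single symbol for $k=\frac{n}{2}$. Summing over $d\le k\le\lfloor n/2\rfloor$ shows the number of symbols at distance at least $d$ from $x$ equals $n+1-2d$ in both parities. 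Since the four neighbor symbols are distinct and all lie in this set, we need $n+1-2d\ge 4$, that is $d\le\frac{n-3}{2}$, and as $d$ is an integer this yields $d\le\lfloor\frac{n-3}{2}\rfloor$.

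The main obstacle is conceptual rather than computational: in an ordinary Latin square the horizontal neighbors are forced distinct (same row) and the vertical neighbors are forced distinct (same column), but the up-neighbor and left-neighbor may coincide, so one can only guarantee two or three distinct neighboring symbols, which recovers just $\lfloor\frac{n-1}{2}\rfloor$. The crux is therefore to verify carefully that (i) an interior-of-block cell exists, which is exactly where $a,b\ge 3$ is used, and (ii) the Sudoku block condition upgrades this to \emph{four} distinct neighbor symbols. Once those two points are secured, the distance count is routine and, pleasantly, uniform across the even and odd cases since the tally $n+1-2d$ does not depend on the parity of $n$.
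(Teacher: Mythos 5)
Your proof is correct and follows essentially the same route as the paper's: both locate a cell interior to an $a\times b$ block (possible since $a,b\ge 3$), use the block's Latin property to conclude its four orthogonal neighbors carry distinct symbols, and then show there are too few symbols at distance $\ge \frac{n-1}{2}$ (or $\ge\frac{n-2}{2}$) from the center to accommodate them. The only difference is cosmetic: the paper argues by a parity case split and pigeonhole on the two (resp.\ one) extremal symbols, while you give a unified count of $n+1-2d$ symbols at distance at least $d$, which handles both parities at once.
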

\begin{proof}
If $a,b \geq 3$, then there is at least one $3\times3$ region inside any $a\times b$ block. Let $u$ be in the center cell of a $3\times3$ region as seen in Figure \ref{fig:CenterThreeByThree}.
\begin{figure}[H]
\centering
\begin{tikzpicture}[scale=.6]
\draw(0,0)grid(3,3); 
\draw[step=3,ultra thick](0,0)grid(3,3);
\foreach\x[count=\i] in{, i, }{\node at(\i-0.5,2.5){$\x$};};
\foreach\x[count=\i] in{j, u, k}{\node at(\i-0.5,1.5){$\x$};};
\foreach\x[count=\i] in{ , \ell, }{\node at(\i-0.5,0.5){$\x$};};
\end{tikzpicture}
\caption{A $3 \times 3$ region contained in an $a\times b$ block.}
\label{fig:CenterThreeByThree}
\end{figure}

Since an $a \times b$ block contains all $n=ab$ symbols exactly once, $u, i$, $j$, $k$ and $\ell$ are all distinct. 

Suppose $n$ is odd. From the proof of Lemma \ref{upperbound}, there are only two integers with distance $\frac{n-1}{2}$ from $u$, the integers $u\pm \frac{n-1}{2}$. Hence at least two of $i,j,k,$ or $\ell$ must have distance less than $\frac{n-1}{2}$ from $u$, and so $\MmD(L_{a,b})\leq\frac{n-3}{2}$.

Suppose $n$ is even. From the proof of Lemma \ref{upperbound}, there is only one integer at maximum distance away from $u$, the integer $u+\frac{n}{2}$. Then at least three of $i,j,k,$ or $\ell$ has distance less than or equal to $\frac{n-2}{2}$. There are only two integers with distance $\frac{n-2}{2}$, the integers $u \pm \frac{n-2}{2}$. So, at least one of $i,j,k,$ or $\ell$ has distance less than $\frac{n-2}{2}$. Hence $\MmD(L_{a,b})\leq \frac{n-4}{2} = \floor{\frac{n-3}{2}}$.
\end{proof}

\begin{theorem}\label{a by odd}
Let $a \leq b$. If $b$ is odd, then $\MmD(L_{a,b}) \geq \frac{n-a}{2}$.
\end{theorem}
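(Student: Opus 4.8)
The plan is to realize the bound by an explicit construction using Algorithm \ref{generalized algorithm}, taking the horizontal increment to be exactly $c=\frac{n-a}{2}$ and the vertical increment $r$ to be any value whose distance is at least $\frac{n-a}{2}$; the inner distance of the resulting square will then be governed by the horizontal step and equal $\frac{n-a}{2}$. The choice $c=\frac{n-a}{2}$ does two things for us. First, by Theorem \ref{gcd stuff}, $\gcd(n,c)=a$, so $C=\frac{n}{\gcd(n,c)}=b$ and the column offset $\beta\floor{\frac{j-1}{C}}=\beta S_j$ is constant on each stack. Second, since $b$ is odd, $c=\frac{a(b-1)}{2}$ is a multiple of $a$, a fact I will use to force the Sudoku property. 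The horizontal distance inside a stack is $\frac{n-a}{2}$, and (assuming $a\ge 2$, as $a=1$ is the trivial Latin-square case) taking $\beta=1$, so that $\gcd(\beta,c)=1$ as Algorithm \ref{generalized algorithm} requires, makes the distance $\pm(c+1)\pmod n$ across a stack boundary equal to $\frac{n-a}{2}+1\ge\frac{n-a}{2}$.

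Next I choose the vertical increment. I require $\gcd(r,a)=1$ together with distance at least $\frac{n-a}{2}$, i.e. $r\in\{\frac{n-a}{2},\dots,\frac{n+a}{2}\}$. Such an $r$ always exists, since these $a+1$ consecutive integers realize every residue modulo $a$ and hence include at least one value coprime to $a$. For odd $n$ the clean choice is $r=\frac{n-1}{2}$, which is coprime to $n$ (and to $a$, since $a\mid n$) by the fact recorded in Lemma \ref{odd squares lemma}; then $\alpha=n$ and there is no vertical offset. For even $n$ I still take such an $r$, but now $\gcd(n,r)=\gcd(b,r)$ may exceed $1$, so a nonzero offset $\alpha$ is needed, and I will pick $\alpha$ coprime to $r$ with $\pm(r+\alpha)\pmod n$ again of distance at least $\frac{n-a}{2}$, exactly as in the $a=2$ construction of Theorem \ref{(2,b) case} (where $r=\frac n2,\ \alpha=1$). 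The key structural point is that $\gcd(r,a)=1$ forces $R=\frac{n}{\gcd(n,r)}=a\cdot\frac{b}{\gcd(b,r)}$ to be a multiple of $a$, so the band offset $\alpha\floor{\frac{i-1}{R}}$ is constant on each band.

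The heart of the argument is verifying that every $a\times b$ block is Latin. Because $C=b$ and $R$ is a multiple of $a$, both offsets are constant on a block, so within a block $m_{i,j}\equiv \text{const}+(i-1)r+(j-1)c\pmod n$. Thus two block entries coincide only if
\[
r\,\Delta i + c\,\Delta j \equiv 0 \pmod n,\qquad |\Delta i|\le a-1,\ |\Delta j|\le b-1 .
\]
Reducing modulo $a$ and using $a\mid n$ and $a\mid c$ gives $r\,\Delta i\equiv 0\pmod a$; since $\gcd(r,a)=1$, we get $\Delta i\equiv 0\pmod a$, and $|\Delta i|\le a-1$ forces $\Delta i=0$. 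The congruence collapses to $c\,\Delta j\equiv 0\pmod n$, and Theorem \ref{modular divides stuff} together with $\gcd(n,c)=a$ yields $b=\frac{n}{a}\mid \Delta j$, so $\Delta j=0$. Hence each block is Latin, the array is an $(a,b)$-Sudoku Latin square, and by Corollary \ref{InnerDist of algo} its inner distance is the minimum of the four adjacent distances, all of which are at least $\frac{n-a}{2}$ with the within-stack horizontal step attaining equality. This gives $\MmD(L_{a,b})\ge \frac{n-a}{2}$.

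The main obstacle is the even-$n$ case: arranging the vertical data so that $\gcd(r,a)=1$ (needed both for the modular block argument and for the band-aligned offset), $\gcd(\alpha,r)=1$ (needed for Algorithm \ref{generalized algorithm} to return a Latin square), and the cross-band distance $\pm(r+\alpha)\pmod n\ge\frac{n-a}{2}$ all hold at once. Here one has genuine freedom in the pair $(r,\alpha)$ within the distance-$\ge\frac{n-a}{2}$ window around $\frac n2$, and the $a=2$ construction already present in Theorem \ref{(2,b) case} furnishes the template; the remaining task is to confirm that a compatible choice exists for every even $a$, a finite check inside that window. By contrast, the odd-$n$ case and the entire block-Latin verification are essentially forced once $c=\frac{n-a}{2}$ is fixed.
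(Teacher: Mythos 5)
Your construction is the paper's construction — Algorithm \ref{generalized algorithm} with $c=\frac{n-a}{2}$, $\beta=1$, a vertical increment of distance greater than $\frac{n-a}{2}$, and the same modular block-Latin argument via $\gcd(n,\tfrac{n-a}{2})=a$ and Theorem \ref{modular divides stuff} — and your odd-$n$ branch ($r=\frac{n-1}{2}$, $\alpha=n$) is exactly the paper's Case 1. The gap is the even-$a$ branch, which you explicitly leave as ``a finite check inside that window'': you never exhibit a pair $(r,\alpha)$ satisfying simultaneously $\gcd(r,a)=1$, $\gcd(\alpha,r)=1$, and cross-band distance at least $\frac{n-a}{2}$. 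That is not a finite check in any global sense — it must be verified for every even $a$ — and it is precisely where the content of the theorem lies, since $c=\frac{n-a}{2}$ and $\beta=1$ are forced and everything else follows mechanically. A proof cannot end by naming the remaining task.

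The fix, which is what the paper does, is to demand more than $\gcd(r,a)=1$: choose $r$ in the window $\left(\frac{n-a}{2},\frac{n}{2}\right)$ with $\gcd(n,r)=1$. Concretely, for $a\equiv 0\pmod 4$ take $r=\frac{n-2}{2}$, and for $a\equiv 2\pmod 4$ (with $a\geq 6$; $a=2$ is Theorem \ref{(2,b) case}) take $r=\frac{n-4}{2}$; in each case $r$ is odd because $b$ is odd, so $\gcd(n,r)$ is odd, yet it divides $\gcd(n,n-2)\mid 2$ (resp.\ $\gcd(n,n-4)\mid 4$), forcing $\gcd(n,r)=1$. Then $R=n$, the band-offset term $\alpha\floor{\frac{i-1}{R}}$ vanishes identically, $\alpha=n$ trivially satisfies $\gcd(\alpha,r)=1$, and there is no cross-band distance to control — which dissolves exactly the three-way compatibility problem you flagged as the main obstacle. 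With that substitution your block-Latin verification (reduce mod $a$ to kill $\Delta i$, then apply Theorem \ref{modular divides stuff} with $\gcd(n,c)=a$ to kill $\Delta j$) goes through verbatim and the proof is complete.
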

\begin{proof}

If $a=1$, and $b$ is odd, then $L_{1,b}=L_b$. By Theorem \ref{MainResult}, $\MmD(L_n)=\left\lfloor\frac{n-1}{2}\right\rfloor$. Since $n=b$ is odd, $\left\lfloor\frac{n-1}{2}\right\rfloor=\frac{n-1}{2}$, and the theorem holds.

If $a=2$, and $b$ is any positive integer, then the theorem holds by Theorem \ref{(2,b) case}.

Now suppose $a\geq 3$ and $b$ is an odd integer. Let $m_{i,j}$ be the entry on cell $(i,j)$ of $L \in L_{a,b}$. We use Algorithm \ref{generalized algorithm} with $c = \frac{n-a}{2}$, $\alpha = n$, and $\beta = 1$. The value for $r$ depends on $a$, but we choose $r$ such that $\frac{n-a}{2} < r < \frac{n}{2}$ and $\gcd(n,r) = 1$ as follows:

\textit{Case 1}: If $a$ is odd, then let $r= \frac{n-1}{2}$. Then $\gcd(n, r)=\gcd(n, \frac{n-1}{2}) =1$ by Theorem \ref{gcd stuff}.
   
\textit{Case 2}: If $a=4k$, then let $r = \frac{n-2}{2} = 2bk-1$. Since $\frac{n-2}{2}$ is odd, then $\gcd(n, \frac{n-2}{2})$ is also odd. Since any divisor of $\frac{n-2}{2}$ is a divisor of $n-2$, $\left.\gcd(n, \frac{n-2}{2})\right\vert \gcd(n,n-2)$. But since $\left.\gcd(n,n-2)\right\vert2$ and  $\gcd(n, \frac{n-2}{2})$ is odd, $\gcd(n, r)=\gcd(n, \frac{n-2}{2}) = 1$.
    
\textit{Case 3}: If $a=4k+2$, then let $r = \frac{n-4}{2}=\frac{b(4k+2)-4}{2}=2kb+b-2=2(kb-1)+b$. Since $b$ is odd, $\frac{n-4}{2}=2(kb-1)+b$ is odd, and so $\gcd(n, \frac{n-4}{2})$ is odd. Since any divisor of $\frac{n-4}{2}$ is a divisor of $n-4$, $\left.\gcd(n, \frac{n-4}{2})\right\vert \gcd(n,n-4)$. But since $\left.\gcd(n,n-4)\right\vert4$ and  $\gcd(n, \frac{n-4}{2})$ is odd, $\gcd(n, r)=\gcd(n, \frac{n-4}{2}) = 1$.

$L$ is a Latin square, and by Corollary \ref{InnerDist of algo}, $L$ has an inner distance $\frac{n-a}{2}$ because $\frac{n-a}{2}$ is the minimum of adjacent distances. We show that $L$ is an $(a,b)$-Sudoku Latin square by proving that each of its $a\times b$ blocks are Latin.

From Algorithm \ref{generalized algorithm}, the entry for cell $(i,j)$ would be 
\[m_{i,j} = 1 + (i-1)r + (j-1)\frac{n-a}{2} +n\left\lfloor{\frac{i-1}{R}}\right\rfloor + \left\lfloor{\frac{j-1}{C}}\right\rfloor \pmod{n}\] or equivalently

\[m_{i,j} = 1 + (i-1)r + (j-1)\frac{n-a}{2} + \left\lfloor{\frac{j-1}{C}}\right\rfloor \pmod{n}.\]

Now suppose two entries, $m_{i_1,j_1}$ and $m_{i_2,j_2}$ in the same $a\times b$ block are the same. Since $m_{i_1,j_1}$ and $m_{i_2,j_2}$ are in the same block, they are in the same stack, and so $\left\lfloor{\frac{j_1-1}{C}}\right\rfloor=\left\lfloor{\frac{j_2-1}{C}}\right\rfloor$. Setting the expressions for $m_{i_1,j_1}$ and $m_{i_2,j_2}$ equal and cancelling terms, we have
\[
\frac{n-a}{2}(j_1 - j_2) \equiv r(i_2-i_1)\pmod{n}.
\]
Note that $\gcd(n, \frac{n-a}{2}) = a$ by Theorem \ref{gcd stuff}. By Theorem \ref{modular divides stuff}, $a$ divides $r(i_2-i_1)$. 

Since the $\gcd(n, r)=1$, then $a$ divides $i_2-i_1$. This is only possible when $i_1 = i_2$, since $m_{i_2}$ and $m_{i_1}$ are in the same block, they are in same band, and so $0\leq |i_2-i_1|\leq a-1$.  So we now have
\[\frac{n-a}{2}(j_1-j_2) \equiv 0 \pmod{n}.\]
Therefore, by Theorem \ref{modular divides stuff}, $\frac{n}{a} = b$ divides $j_1-j_2$. This is only possible if $j_1 = j_2$ since $j_2$ and $j_1$ are along the same block, with width $b$.  So no two distinct cells in an $a\times b$ block contain the same integer. 
\end{proof}

\begin{figure}[H]
\centering
\begin{tikzpicture}[scale=.6]
\draw(0,0)grid(9,9);
\draw[step=3,ultra thick](0,0)grid(9,9);
\foreach\x[count=\i] in{1,	6,	2,	7,	3,	8,	4,	9,	5}{\node at(\i-0.5,8.5){$\x$};};
\foreach\x[count=\i] in{7,	3,	8,	4,	9,	5,	1,	6,	2}{\node at(\i-0.5,7.5){$\x$};};
\foreach\x[count=\i] in{4,	9,	5,	1,	6,	2,	7,	3,	8}{\node at(\i-0.5,6.5){$\x$};};
\foreach\x[count=\i] in{9,	5,	1,	6,	2,	7,	3,	8,	4}{\node at(\i-0.5,5.5){$\x$};};
\foreach\x[count=\i] in{6,	2,	7,	3,	8,	4,	9,	5,	1}{\node at(\i-0.5,4.5){$\x$};};
\foreach\x[count=\i] in{3,	8,	4,	9,	5,	1,	6,	2,	7}{\node at(\i-0.5,3.5){$\x$};};
\foreach\x[count=\i] in{8,	4,	9,	5,	1,	6,	2,	7,	3}{\node at(\i-0.5,2.5){$\x$};};
\foreach\x[count=\i] in{5,	1,	6,	2,	7,	3,	8,	4,	9}{\node at(\i-0.5,1.5){$\x$};};
\foreach\x[count=\i] in{2,	7,	3,	8,	4,	9,	5,	1,	6}{\node at(\i-0.5,0.5){$\x$};};
\end{tikzpicture}
\caption{A $(3,3)$-Sudoku Latin Square with $\MmD(L_{3,3}) =3$}
\label{fig:(3,3)ex}
\end{figure}

Algorithm \ref{generalized algorithm} does not always yield a Latin square of maximum inner distance. The following algorithm is more complicated due to the fact that $\gcd(n,\frac{n-a}{2})=\frac{a}{2}$, so the horizontal offsets occur every 2 stacks ($2b$ cells) instead. This affects how integers are placed within a block, and requires a modification of Algorithm \ref{generalized algorithm}.

\begin{algorithm} \label{algo:EvenEven}
Let $a=2x,b=2y$, with $2\leq x\leq y\in \mathbb{N}$. Let $n = ab = 4xy$.\\
Fill cell $(i,j)$ of an $n\times n$ grid with the integer:
\[m_{i,j} = 1 + (j-1)(2xy-x) + \floor{\frac{j-1}{4y}} + (i-1)(2xy) + \sum\limits_{k=1}^{i} R(k)\]
or equivalently:
\[m_{i,j} = 1 + (j-1)\left(\frac{n-a}{2}\right) + \floor{\frac{j-1}{2b}} + (i-1)\frac{n}{2} + \sum\limits_{k=1}^{i} R(k)\]
\begin{center}
Where $R(k) = 
\begin{cases}
  0, & \text{if $k \equiv 0 \pmod{2}$ or if $k=1$}\\
  -x = -\frac{a}{2}, & \text{if $k \equiv 1 \pmod{a}$ and $k\neq 1$}\\
  1, & \text{if $1 < k \pmod{2a} < a$}\\
  -1, & \text{if $a+1 < k \pmod{2a} < 2a$}
\end{cases}.$
\end{center}
\end{algorithm}

\begin{example}
The simplest possible example is the $(4,4)$-Sudoku Latin Square where $x=y=2$. Using the algorithm, we horizontally increment by $2xy-x=6$, and by $2xy-x+1=7$ across the center. Vertically, for the entry $m_{i,j}$, if $i$ is even,  $m_{i,j}=m_{i-1,j}+8\pmod{16}$. \\
For odd $i>1$:\\
If $i\equiv 1\pmod{4},\,m_{i,j}=m_{i-1,j}+6 \pmod{16}$.\\
If $1<i\pmod{8}<4,\,m_{i,j}=m_{i-1,j}+9 \pmod{16}$.\\
If $5<i\pmod{8}<8,\,m_{i,j}=m_{i-1,j}+7 \pmod{16}$.
\end{example}
\begin{figure}[H]
    \centering
\tikzset{mystripes dist/.initial=0.25}
\pgfdeclarepatternformonly[/tikz/mystripes dist]{mystripes}
{\pgfpointorigin}{\pgfpoint{1cm}{1cm}}
{\pgfpoint{1cm}{1cm}}
{
  \foreach \x in {0,\pgfkeysvalueof{/tikz/mystripes dist},...,1}{
    \pgfmathsetmacro{\nx}{1-\x}
    \tikz[overlay]\draw[decoration = {random steps, segment length = 0.5mm, amplitude = 0.15pt}, color=black!40, decorate] (\x, 0) -- ++(\nx,\nx);
    \tikz[overlay]\draw[decoration = {random steps, segment length = 0.5mm, amplitude = 0.15pt}, color=black!40, decorate] (0, \x) -- ++(\nx,\nx);
  }
}
\begin{tikzpicture}[scale=.7]
\draw (-0.5,14) node[left]{$B_0$};
\draw (-0.5,10) node[left]{$B_1$};
\draw (-0.5,6) node[left]{$B_2$};
\draw (-0.5,2) node[left]{$B_3$};
\draw[fill=black!40](0,12) rectangle (16,11);
\draw[fill=black!40](0,8) rectangle (16,7);
\draw[fill=black!40](0,4) rectangle (16,3);

\draw[pattern=mystripes](0,14) rectangle (16,13);
\draw[pattern=mystripes](0,6) rectangle (16,5);

\draw[pattern=crosshatch dots](0,10) rectangle (16,9);
\draw[pattern=crosshatch dots](0,2) rectangle (16,1);

\draw(0,0)grid(16,16);
\draw[step=4,ultra thick](0,0)grid(16,16);

\foreach\x[count=\i] in{3, 9 , 15 , 5 , 11 , 1 , 7 , 13 , 4 , 10 , 16 , 6 , 12 , 2 , 8 , 14}{\node at(\i-0.5,0.5){$\x$};};
\foreach\x[count=\i] in{11 , 1 , 7 , 13 , 3 , 9 , 15 , 5 ,12 , 2 , 8 , 14 , 4 , 10 , 16 , 6}{\node at(\i-0.5,1.5){$\x$};};
\foreach\x[count=\i] in{4 , 10 , 16, 6 , 12 , 2 , 8 ,14 ,5 ,11 ,1 , 7 , 13 , 3 , 9 , 15}{\node at(\i-0.5,2.5){$\x$};};
\foreach\x[count=\i] in{12 , 2 , 8 , 14 , 4 , 10 , 16 , 6 , 13 , 3 , 9 , 15 ,5 , 11 , 1 , 7}{\node at(\i-0.5,3.5){$\x$};};
\foreach\x[count=\i] in{6 , 12 , 2 , 8 , 14 , 4 , 10 , 16 , 7 , 13 , 3 , 9 , 15 , 5 , 11 , 1}{\node at(\i-0.5,4.5){$\x$};};
\foreach\x[count=\i] in{14 ,4 , 10 , 16 , 6 , 12 , 2 , 8 , 15 , 5 , 11 , 1, 7 , 13 , 3 , 9}{\node at(\i-0.5,5.5){$\x$};};
\foreach\x[count=\i] in{5, 11 , 1, 7 , 13 , 3 , 9 , 15 , 6 , 12 , 2 , 8 , 14 , 4 , 10 , 16}{\node at(\i-0.5,6.5){$\x$};};
\foreach\x[count=\i] in{13 , 3 , 9 , 15 , 5 , 11 , 1 , 7 , 14 , 4 , 10 ,16 , 6 , 12 , 2 , 8}{\node at(\i-0.5,7.5){$\x$};};
\foreach\x[count=\i] in{7 , 13 , 3 ,9 , 15 ,5 ,11 , 1 , 8 ,14, 4 , 10 ,16 ,6 ,12, 2}{\node at(\i-0.5,8.5){$\x$};};
\foreach\x[count=\i] in{15 ,5 , 11 , 1, 7, 13, 3 , 9, 16, 6, 12, 2, 8,14, 4,10}{\node at(\i-0.5,9.5){$\x$};};
\foreach\x[count=\i] in{8,14,4,10,16, 6 , 12 , 2 , 9, 15 ,5 ,11 , 1 , 7 , 13 , 3}{\node at(\i-0.5,10.5){$\x$};};
\foreach\x[count=\i] in{16, 6, 12, 2, 8, 14, 4, 10, 1, 7, 13, 3, 9, 15, 5, 11}{\node at(\i-0.5,11.5){$\x$};};
\foreach\x[count=\i] in{10, 16, 6, 12, 2, 8, 14, 4, 11, 1, 7, 13, 3, 9, 15, 5}{\node at(\i-0.5,12.5){$\x$};};
\foreach\x[count=\i] in{2,8, 14, 4, 10, 16, 6, 12, 3, 9, 15, 5, 11, 1, 7, 13}{\node at(\i-0.5,13.5){$\x$};};
\foreach\x[count=\i] in{9, 15,5, 11, 1, 7, 13, 3, 10, 16, 6, 12, 2, 8, 14, 4}{\node at(\i-0.5,14.5){$\x$};};
\foreach\x[count=\i] in{1, 7, 13, 3, 9, 15, 5, 11, 2, 8,14,4,10,16, 6, 12}{\node at(\i-0.5,15.5){$\x$};};
\end{tikzpicture}
    \caption{A $(4,4)$-Sudoku Latin Square with the \MaxMinDist~of 6. The shading of row $i$ corresponds to the value of $R(i)$ (explained in the discussion below regarding type of cells)}
    \label{4,4 with MmD}
\end{figure}
The above algorithm can be expressed as follows:\\
Place a 1 in the top left corner of an $n\times n$ grid. Horizontally, increment (add to previous value) by $2xy-x$, adding 1 every $4y = 2b$ cells (or every two stacks). Vertically, increment by $2xy$ plus some constant $R(i)$, depending on the \textbf{type} of cell. There are 4 types, depending on the row $i$:

\begin{description}
    \item[Type 0)] The first row and all even rows has $R(i)=0$. In Figure \ref{4,4 with MmD}, Type 0 corresponds to cells without shading.
    \item[Type I)] The first row of every band except the first band ($B_0$) has $R(i) = -x$. In Figure \ref{4,4 with MmD}, Type I corresponds to solid gray cells.
    \item[Type II)] The odd rows in the even bands has $R(i) = 1$. In Figure \ref{4,4 with MmD}, Type II corresponds to cells with dashed lines.
    \item[Type III)] The odd rows in the odd bands has $R(i) = -1$. In Figure \ref{4,4 with MmD}, Type III corresponds to dotted cells.
\end{description}

Within every block except the first, the top row is the only set of Type I cells, the remaining odds are entirely Type II or entirely Type III, alternating vertically along the bands. All other cells are Type 0.

\begin{proof}[Proof The Algorithm Produces an $(a,b)$-Sudoku Latin Square]
Let $a=2x$, $b = 2y$ for integers $2\leq x \leq y$, and $n = ab = 4xy.$ To avoid fractions, we swap to using $2x,2y$ over $a, b$ (though we keep$\pmod{n}$ notation). It can be shown that $\frac{n-a}{2} = 2xy-x$, and $\gcd(n, \frac{n-a}{2}) = \gcd(4xy,2xy-x)=x = \frac{a}{2}$. We prove here the resulting array using the algorithm above is an $(a,b)$-Sudoku Latin square by showing the columns and $a\times b$ blocks are Latin. The proof that rows are Latin is identical to that of Algorithm \ref{generalized algorithm}.

{
\textit{Blocks}:
Suppose $m_{i_1, j_1} \equiv m_{i_2, j_2} \pmod{n}$. We set the two expressions for $m_{i,j}$ equal and arrive at the conclusion $i_1 = i_2$ and $j_1 = j_2$. Without loss of generality, if $i_1< i_2$:
\[m_{i_1, j_1} \equiv m_{i_2, j_2} \pmod{n}\iff (j_1-j_2)(2xy-x)- (i_2-i_1)(2xy)\equiv \sum_{k=i_1+1}^{i_2}R(k)\pmod{n}.\]
Here we cancelled constant terms including the floor term, assuming the cells are from the same block.

Note that $x$ divides the terms on the left hand side and $n=4xy$. Therefore by Theorem \ref{modular divides stuff}, $x$ divides the sum on the right. Within the range $[i_1+1, i_2]$, the values of $R(k)$ are all in $\{0, 1\}$, or all in $\{0, -1\}$, with at most $x-1$ non-zero terms (they appear only on odd rows). Thus since $x$ divides the sum:
\[-(x-1) \leq \sum_{k=i_1+1}^{i_2}R(k) \leq x-1\implies \sum_{k=i_1+1}^{i_2}R(k) = 0.\]
This can only be true if $i_2 = i_1$ or if $i_2 = i_1 + 1$ and $i_2$ is even. We assumed $i_1<i_2$:
\[(j_1-j_2)(2xy-x)- 2xy \equiv 0\pmod{n} \iff [(j_1-j_2)(2y-1)-2y]x\equiv 0\pmod{n}.\]
Since $m_{i_1, j_1}, m_{i_2, j_2}$ are in the same block, $0 \leq i_2-i_1 \leq 2x-1$ and $0 \leq |j_1-j_2| \leq 2y-1$. By Theorem \ref{modular divides stuff}, $4y$ divides $(j_1-j_2)(2y-1)-2y$. So, for some integer $k$, $(j_1-j_2)(2y-1) = 4yk + 2y$. But this implies $2y$ divides $j_1-j_2$, as $\gcd(2y, 2y-1)=1$. Therefore $j_1 = j_2$, so the expression above reads $0\equiv 2xy\pmod{4xy}$, which is a contradiction.

We conclude $i_1 = i_2$, then we reduce the original expression to $(j_1-j_2)(2xy-x)\equiv 0\pmod{n}$. Since $\gcd(4xy, 2xy-x) = x$, by Theorem \ref{modular divides stuff}, $4y$ divides $j_1-j_2$.  So $j_1= j_2$.
}

{
\textit{Columns:} 
Assume two cells along the same column $j$ contain the same entry, so that $m_{i_1, j}\equiv m_{i_2, j}\pmod{n}$. We show $i_1 = i_2$ by setting their expressions equal. Assuming $i_1 < i_2$, and cancelling identical terms we reduce to:
\[m_{i_1, j}\equiv m_{i_2, j}\pmod{n}\iff (i_1-i_2)(2xy) \equiv \sum_{k=i_1 + 1}^{i_2}R(k) \pmod{n}.\]
If $i_2-i_1$ is odd, $\gcd\left((i_1-i_2)(2xy),4xy\right)=2xy$ and if $i_2-i_1$ is even, $\gcd\left((i_1-i_2)(2xy),4xy\right)=4xy$. In either case, by Theorem \ref{modular divides stuff}, $2xy$ must divide the sum on the right. We show this implies $i_2=i_1$ by splitting the sum by cell type:
\[\sum_{k=i_1+1}^{i_2}R(k) = \sum_{\rm Type~ I}(-x) + \sum_{\rm Type~ II}(1) + \sum_{\rm Type~ III}(-1) = I_1 + I_2 + I_3.\]
Where $I_1$ is the sum over Type I cells, $I_2$ is the sum over over Type II, and $I_3$ is the sum over over Type III. 
 
Any column intersects $2y$ blocks, and since row 1 is Type 0, each column contains $2y-1$ Type I cells. Thus, $-2xy+x \leq I_1 \leq 0$. Additionally, there are exactly $x-1$ Type II cells or $x-1$ Type III cells in a block. Furthermore, since the Type II and Type III cells have opposite values for $R(k)$, in any adjacent pair of bands, $B_i,B_{i+1}$:
$$\displaystyle \sum_{{\rm Type~ II, III~in~}B_i,B_{i+1}}R(k)=0.$$ Thus, $-(x-1) \leq I_2+ I_3 \leq x-1$. Therefore,
\[-2xy+1 \leq I_1 + I_2 + I_3 \leq x-1.\]
Since we showed $2xy$ divides the sum, 
\[I_1 +I_2 + I_3 = \sum_{k=i_1+1}^{i_2}R(k) = 0.\]
Then $I_1 = -(I_2 + I_3)$. Since $x$ divides $I_1$ and $-x < I_2+ I_3 < x$, we must have that $I_1 = 0 = I_2 +I_3$. But since $I_1=0$, $i_1$ and $i_2$ intersect the same block. Thus, $i_1=i_2$ since blocks are Latin.
}
\end{proof}

\begin{theorem}\label{even,even}
For even $a,b$, $\MmD(L_{a,b}) \geq \frac{n-a}{2}$.
\end{theorem}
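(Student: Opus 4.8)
The plan is to exhibit, for every even pair $a\le b$, an $(a,b)$-Sudoku Latin square whose inner distance is exactly $\frac{n-a}{2}$; the existence of such a square immediately yields the lower bound $\MmD(L_{a,b})\ge\frac{n-a}{2}$. I would split into two cases according to whether $a=2$ or $a\ge 4$.

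First I would dispose of the boundary case $a=2$. Here $\frac{n-a}{2}=\frac{n-2}{2}=b-1$, and Theorem \ref{(2,b) case} already produces a $(2,b)$-Sudoku Latin square of inner distance $b-1$, so the bound holds (with equality). This lets me restrict attention to $a=2x$ and $b=2y$ with $2\le x\le y$, which is exactly the regime covered by Algorithm \ref{algo:EvenEven}.

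For $a\ge 4$ I would invoke Algorithm \ref{algo:EvenEven}, which the preceding proof shows produces an $(a,b)$-Sudoku Latin square; it then remains only to compute its inner distance, i.e. the minimum adjacent distance. The horizontal increments are $2xy-x=\frac{n-a}{2}$ within each pair of stacks and $2xy-x+1=\frac{n-a}{2}+1$ across the boundary every $2b$ cells; since $a\ge 2$, the modular distances of these are $\frac{n-a}{2}$ and $\frac{n-a+2}{2}$ respectively, both at least $\frac{n-a}{2}$. The vertical increments are $\frac{n}{2}+R(i)$ with $R(i)\in\{0,-\frac{a}{2},1,-1\}$; taking each modular distance gives the values $\frac{n}{2}$, $\frac{n-a}{2}$, $\frac{n-2}{2}$, $\frac{n-2}{2}$, each again at least $\frac{n-a}{2}$. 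Hence every adjacent distance is at least $\frac{n-a}{2}$, and the value $\frac{n-a}{2}$ is attained (by the base horizontal increment and by the Type I vertical increment), so the inner distance equals $\frac{n-a}{2}$.

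The routine but delicate part is the vertical distance check: because $R(i)$ depends on the cell type, one must verify that reducing $\frac{n}{2}+R(i)$ modulo $n$ and then taking $\min\{\,\cdot\,,\,n-\cdot\,\}$ never drops below $\frac{n-a}{2}$ for any of the four values of $R(i)$. The only genuinely tight case is $R(i)=-\frac{a}{2}$, which gives exactly $\frac{n-a}{2}$; one must also confirm that the increments $\frac{n}{2}\pm 1$ wrap to distance $\frac{n-2}{2}$, which exceeds $\frac{n-a}{2}$ strictly since $a\ge 4$. I expect no real obstacle beyond this bookkeeping, as the hard structural work---verifying that the array is genuinely an $(a,b)$-Sudoku Latin square---has already been completed in the proof preceding the statement.
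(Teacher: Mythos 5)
Your proposal is correct and follows essentially the same route as the paper: invoke Algorithm \ref{algo:EvenEven} (whose Sudoku-Latin-square property is established just before the theorem) and check that every horizontal and vertical adjacent distance, reduced modulo $n$, is at least $\frac{n-a}{2}$. The only difference is that you explicitly dispatch the boundary case $a=2$ via Theorem \ref{(2,b) case}, which the paper's proof leaves implicit by silently assuming $a=2x\geq 4$; this is a small but genuine tightening rather than a different argument.
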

\begin{proof}
Using Algorithm \ref{algo:EvenEven}, the adjacent distances between any two cells along the same row are $\frac{n-a}{2}$ or  $\frac{n-a}{2} + 1$ (every 2 stacks or $4y$ cells). The adjacent distances between any two cells along the same column are $\frac{n-a}{2}$ (between bands), $\frac{n}{2}$ (from an odd to an even row), and $\frac{n}{2} \pm 1$ (everywhere else). Since $a=2x\geq 4$, the minimum of these is $\frac{n-a}{2}$. 
\end{proof}

It may appear that in general $\frac{n-a}{2}$ is the \MaxMinDist~for $(a,b)$-Sudoku Latin squares. However, for the following cases, we have been unable to reach this limit. These cases highlight another limitation of Algorithm \ref{generalized algorithm}.

\begin{theorem}\label{odd,0mod4}
Let $a \leq b$, where $a$ is odd, and $b\equiv 0\pmod{4}$. Then $\MmD (L_{a,b})\geq \dfrac{n-\min\{2a,b\}}{2}$.
\end{theorem}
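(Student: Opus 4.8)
The plan is to prove this lower bound by explicit construction using Algorithm \ref{generalized algorithm}, producing two families of $(a,b)$-Sudoku Latin squares whose inner distances are $\frac{n-2a}{2}$ and $\frac{n-b}{2}$ respectively, and then taking whichever is larger. The conceptual point driving everything is that, since $a$ is odd and $b\equiv 0\pmod 4$, we have $n=ab\equiv 0\pmod 4$ yet $n-a$ is odd, so $\frac{n-a}{2}$ is \emph{not} an integer and cannot be used as an increment in the algorithm. This is exactly the obstruction that prevents reaching $\frac{n-a}{2}$ and forces a deficit of $\min\{2a,b\}$ instead of $a$: the distance-bearing increment must have a prescribed $\gcd$ with $n$ to keep the blocks Latin, and the integer increment of that type closest to $\frac{n}{2}$ falls short of $\frac{n}{2}-\frac{a}{2}$.

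For the first construction (inner distance $\frac{n-2a}{2}$) I take $c=\frac{n-2a}{2}$, $r=\frac{n}{2}+1$, $\alpha=n$, and $\beta=1$. First I record two $\gcd$ facts that replace Theorem \ref{gcd stuff} (inapplicable here since $b$ is even): writing $b=4t$, one has $\frac{n-2a}{2}=a(2t-1)$ with $\gcd(4t,2t-1)=1$, so $\gcd(n,c)=a$ and hence $C=b$, meaning the horizontal offsets land exactly on stack boundaries; and since $\frac{n}{2}$ is even, $\frac{n}{2}+1$ is odd with $\gcd(n,\frac{n}{2}+1)=1$, so $\gcd(\alpha,r)=1$ and $R=n$. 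These verify the hypotheses of Algorithm \ref{generalized algorithm}, so the output is a Latin square. To show the $a\times b$ blocks are Latin I mirror Theorem \ref{a by odd}: assuming $m_{i_1,j_1}=m_{i_2,j_2}$ within a block, the stack-floor terms agree and $\alpha\equiv 0\pmod n$ kills the band-floor term, reducing to $(j_1-j_2)c\equiv(i_2-i_1)r\pmod n$. Then Theorem \ref{modular divides stuff} with $\gcd(n,c)=a$ and $\gcd(n,r)=1$ forces $a\mid(i_2-i_1)$, hence $i_1=i_2$, and then $b\mid(j_1-j_2)$, hence $j_1=j_2$. Finally Corollary \ref{InnerDist of algo} gives inner distance $\min\{\frac{n}{2}-1,\ \frac{n}{2}-a,\ \frac{n}{2}-1,\ \frac{n}{2}-a+1\}=\frac{n-2a}{2}$.

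The second construction (inner distance $\frac{n-b}{2}$, relevant when $b<2a$, where necessarily $a\ge 3$ so that $r\ge 1$) is the row--column analogue: take $r=\frac{n-b}{2}$, $c=\frac{n}{2}+1$, $\alpha=1$, $\beta=n$. Here $\frac{n-b}{2}=b\cdot\frac{a-1}{2}$ is an integer since $a$ is odd, and $\gcd(a,\frac{a-1}{2})=1$, so $\gcd(n,r)=b$ and $R=a$, placing the vertical offsets on band boundaries, while $\gcd(n,c)=1$ as before. The block-Latin check is the same computation with the roles of rows/columns and of $a,b$ interchanged, now invoking $\gcd(n,r)=b$ and $\gcd(n,c)=1$ to conclude $j_1=j_2$ and then $i_1=i_2$. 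Corollary \ref{InnerDist of algo} yields inner distance $\frac{n-b}{2}$.

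Combining the two, when $2a\le b$ the first construction gives $\MmD(L_{a,b})\ge\frac{n-2a}{2}$, and when $b<2a$ the second gives $\MmD(L_{a,b})\ge\frac{n-b}{2}$; in both regimes the value equals $\frac{n-\min\{2a,b\}}{2}$, proving the claim. The routine steps are the two $\gcd$ identities and the distance computation, while the step I expect to be the genuine crux is the selection of increments: one must recognize that the small (distance-bearing) increment has to satisfy $\gcd(n,\cdot)=a$ (resp.\ $b$) for the blocks to remain Latin, and that the largest such increment below $\frac{n}{2}$ is $\frac{n-2a}{2}$ (resp.\ $\frac{n-b}{2}$)---which is precisely why the deficit is $\min\{2a,b\}$ rather than $a$.
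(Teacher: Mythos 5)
Your proposal is correct and follows essentially the same route as the paper: both build the two squares with Algorithm \ref{generalized algorithm}, pairing the distance-bearing increment $\frac{n-2a}{2}$ (resp.\ $\frac{n-b}{2}$, used when $b<2a$) with a companion increment congruent to $\pm\frac{n-2}{2}\pmod n$ that is coprime to $n$, and then verify the blocks are Latin exactly as in Theorem \ref{a by odd}. Your choice $r=\frac{n}{2}+1$ is just the negative of the paper's $\frac{n-2}{2}$ modulo $n$, so it yields the same adjacent distances; you simply spell out the $\gcd$ computations and the block-Latin check that the paper leaves as ``similar to Theorem \ref{a by odd}.''
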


\begin{proof}
   If $b\equiv 0\pmod{4}$, then $\gcd(b,b-2) = 2$. Additionally, $\gcd(n,\frac{n-2a}{2}) = \gcd\left(ab,a\left(\frac{b-2}{2}\right)\right)=a$. From the proof of Theorem \ref{a by odd}, $\gcd(n, \frac{n-2}{2}) = 1$.  Then using Algorithm \ref{generalized algorithm} with $c = \frac{n-2a}{2}$, $r = \frac{n-2}{2}$, $\beta = 1$, and $\alpha=n$, the resulting square is a Latin square with distance $\frac{n-2a}{2}$. 
   
    If $b < 2a$, then $\frac{n-b}{2} > \frac{n-2a}{2}$.  From Theorem \ref{gcd stuff}, $\gcd(n, \frac{n-b}{2}) = b$.
     Therefore, using Algorithm \ref{generalized algorithm} with $r = \frac{n-b}{2}$, $c = \frac{n-2}{2}$, $\alpha = 1$, and $\beta=n$, the resulting square is a Latin square with distance $\frac{n-b}{2}$.
     
     The proof the Latin squares above are $(a,b)$-Sudoku Latin squares is similar to the proof of Theorem \ref{a by odd}. 
\end{proof}

\begin{theorem}\label{odd,2mod4}
Let $a \leq b$, where $a$ is odd, and $b\equiv 2\pmod{4}$. Then $\MmD (L_{a,b})\geq \dfrac{n-\min\{4a,b\}}{2}$.
\end{theorem}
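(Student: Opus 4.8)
The plan is to follow the template of Theorems~\ref{a by odd} and~\ref{odd,0mod4}: produce explicit squares from Algorithm~\ref{generalized algorithm} whose inner distance equals $\max\{\frac{n-4a}{2}, \frac{n-b}{2}\} = \frac{n-\min\{4a,b\}}{2}$, and verify each is an $(a,b)$-Sudoku Latin square. The case $a=1$ reduces to an ordinary Latin square $L_b$, where Theorem~\ref{MainResult} gives $\MmD(L_b) = \frac{b-2}{2} \geq \frac{n-\min\{4,b\}}{2}$, so I would dispose of it first and assume $a\geq 3$ throughout. Note that since $a$ is odd and $b\equiv 2\pmod 4$ we have $n = ab \equiv 2 \pmod 4$, a parity fact that governs every gcd computation below; in particular $b\geq 6$.

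The first construction targets distance $\frac{n-4a}{2}$ (the relevant bound when $4a\leq b$). I would run Algorithm~\ref{generalized algorithm} with horizontal increment $c = \frac{n-4a}{2}$ and vertical increment $r = \frac{n-4}{2}$, taking $\alpha = n$ and $\beta = 1$. The two gcd facts that make this work are $\gcd(n, \frac{n-4a}{2}) = a$ and $\gcd(n, \frac{n-4}{2}) = 1$. For the former, write $\frac{n-4a}{2} = a\cdot\frac{b-4}{2}$ and use $\gcd(ab, am) = a\gcd(b,m)$ together with $\gcd(b, \frac{b-4}{2}) = 1$, which holds because $b\equiv 2\pmod 4$ makes $\frac{b-4}{2}$ odd; for the latter, $\frac{n-4}{2}$ is odd and $\gcd(n,\frac{n-4}{2})$ divides $\gcd(n,n-4)\mid 4$, forcing the value to $1$ since $n\equiv 2\pmod 4$. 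Because $\gcd(n,r)=1$ the vertical offset never activates ($R=n$), and because $\gcd(n,c)=a$ the horizontal offset activates exactly at stack boundaries ($C=b$). By Corollary~\ref{InnerDist of algo} the inner distance is $\min\{\tfrac{n-4}{2}, \tfrac{n-4a}{2}, \tfrac{n-4a+2}{2}\} = \frac{n-4a}{2}$ (using $a\geq 1$). Verifying blocks are Latin is then a verbatim repeat of the argument in Theorem~\ref{a by odd}: setting two block-entries equal and cancelling the matching stack-offset terms yields $\frac{n-4a}{2}(j_1-j_2)\equiv r(i_2-i_1)\pmod n$; Theorem~\ref{modular divides stuff} with $\gcd(n,c)=a$ and $\gcd(n,r)=1$ forces $a\mid(i_2-i_1)$, hence $i_1=i_2$, and then $b\mid(j_1-j_2)$, hence $j_1=j_2$.

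The second construction targets distance $\frac{n-b}{2}$ (the relevant bound when $b<4a$). Here I would swap the roles of rows and columns: take vertical increment $r = \frac{n-b}{2}$ with $\alpha = 1$, and horizontal increment $c = \frac{n-4}{2}$ with $\beta = n$. By Theorem~\ref{gcd stuff} applied to $n = b\cdot a$ with the odd factor $a$, we get $\gcd(n, \frac{n-b}{2}) = b$, so the vertical offset activates exactly at band boundaries ($R=a$); and $\gcd(n, \frac{n-4}{2}) = 1$ as above gives $C=n$. Since $\frac{n-4}{2}\geq\frac{n-b}{2}$ (because $b\geq 6$), Corollary~\ref{InnerDist of algo} yields inner distance $\frac{n-b}{2}$, and the block-Latin check is the transpose of the first one, using $\gcd(b,\frac{n-4}{2})\mid\gcd(n,\frac{n-4}{2})=1$ to conclude $b\mid(j_1-j_2)$ and then $a\mid(i_1-i_2)$.

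The conceptual crux --- and the reason $\min\{4a,b\}$ appears here rather than the $\min\{2a,b\}$ of Theorem~\ref{odd,0mod4} --- is the gcd computation $\gcd(n,\frac{n-2a}{2}) = 2a$ for $b\equiv 2\pmod 4$ (now $\frac{b-2}{2}$ is even, so $\gcd(b,\frac{b-2}{2})=2$). With $\gcd(n,c)=2a$ the horizontal offset would fire every $C = \frac{b}{2}$ columns, i.e.\ twice inside each width-$b$ stack, so the cancellation in the block argument breaks and Algorithm~\ref{generalized algorithm} cannot realize Sudoku blocks at distance $\frac{n-2a}{2}$. Recognizing that $\frac{n-4a}{2}$ is the best increment of the form $\frac{n-ka}{2}$ the algorithm can accommodate, and that the usual coprime fallback $\frac{n-2}{2}$ must be replaced by $\frac{n-4}{2}$ precisely because $n\equiv 2\pmod 4$, is the main obstacle; everything after those gcd identities is a routine re-run of the proofs of Theorems~\ref{a by odd} and~\ref{odd,0mod4}.
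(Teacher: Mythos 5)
Your proposal is correct and follows essentially the same route as the paper: both constructions (horizontal increment $\frac{n-4a}{2}$ with vertical increment $\frac{n-4}{2}$ when $4a\leq b$, and the row/column-swapped version with vertical increment $\frac{n-b}{2}$ when $b<4a$) are exactly the ones the paper uses, with the same gcd computations and the same block-Latin verification borrowed from Theorem \ref{a by odd}. Your write-up is in fact more detailed than the paper's (which leaves the inner-distance and Sudoku checks as "similar to Theorem \ref{a by odd}"), and your closing remark on why $\gcd(n,\frac{n-2a}{2})=2a$ forces the jump from $2a$ to $4a$ is a correct reading of the obstruction.
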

\begin{proof}
    If $b\equiv 2\pmod{4}$, then $\gcd(b,b-4) = 2$. Additionally, $\gcd(n,\frac{n-4a}{2}) = \gcd\left(ab,a\left(\frac{b-4}{2}\right)\right)=a$. From the proof of Theorem \ref{a by odd}, $\gcd(n, \frac{n-4}{2}) = 1$.  Therefore, using Algorithm \ref{generalized algorithm}, with $c = \frac{n-4a}{2}$, $r = \frac{n-4}{2}$, $\beta = 1$, and $\alpha=n$, the resulting square is a Latin square with distance $c = \frac{n-4a}{2}$. This distance can be improved if $b < 4a$, where we take $c = \frac{n-4}{2}$, $r = \frac{n-b}{2}$, $\alpha = 1$, and $\beta=n$.
   
   The proof the Latin squares above are $(a,b)$-Sudoku Latin squares is similar to the proof of Theorem \ref{a by odd}. 
\end{proof}

We have lower bounds for the \MaxMinDist~of $(a,b)$-Sudoku Latins squares for every possible case of $a,b$. Here, we discuss specific cases (mainly small cases) and attach even more restrictions on them.

\begin{lemma}\label{odd by odd at least 5}
Let $a,b$ be odd and $5\leq a\leq b$. Then $\MmD(L_{a,b}) \leq \frac{n-5}{2}$.
\end{lemma}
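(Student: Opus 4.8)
The plan is to rule out the single value $\frac{n-3}{2}$. Lemma \ref{theoretical upper bound} already gives $\MmD(L_{a,b})\le\floor{\frac{n-3}{2}}=\frac{n-3}{2}$, the last equality because $n=ab$ is odd, and all \innerDist{} values are integers in $\{1,\dots,\frac{n-1}{2}\}$; hence it suffices to show that no $(a,b)$-Sudoku Latin square with $5\le a\le b$ attains \innerDist{} exactly $\frac{n-3}{2}$. Suppose, for contradiction, that $L$ does. Then every adjacent pair is at distance $\frac{n-3}{2}$ or $\frac{n-1}{2}$, so, writing $p=\frac{n-1}{2}$, each adjacent difference modulo $n$ lies in $\{p-1,p,p+1,p+2\}$ (as in the proof of Lemma \ref{upperbound}). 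The first move is to pass to the \emph{doubled} square $\widetilde{m}_{i,j}\equiv 2m_{i,j}\pmod n$. Since $n$ is odd, multiplication by $2$ is a bijection of $\mathbb{Z}/n\mathbb{Z}$, so each block of $\widetilde{m}$ still contains every residue exactly once, while $2\{p-1,p,p+1,p+2\}\equiv\{-3,-1,1,3\}\pmod n$. Thus in the doubled square every horizontally or vertically adjacent pair differs by one of $\pm1,\pm3$ modulo $n$.

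The heart of the argument is a local rigidity statement: any $3\times3$ region lying inside one block is forced to be \emph{linear}. I would fix such a region, translate so the center entry is $0$, and use that its four orthogonal neighbors are distinct (a block is Latin) elements of $\{-3,-1,1,3\}$, hence equal that whole set. Writing $N(t)=\{t-3,t-1,t+1,t+3\}$, each corner must lie in $N(x)\cap N(y)\setminus\{0\}$ where $x,y$ are its two edge-neighbors; because $n=ab\ge 25$ these small sets contain no accidental modular coincidences. A short case analysis over the assignments of $\{-3,-1,1,3\}$ to (top, bottom, left, right) then shows that whenever opposite neighbors are not negatives of one another, some corner's candidate set becomes empty, a contradiction; the surviving assignments admit a unique completion with a constant vertical step $\sigma$ and a constant horizontal step $\tau$ satisfying $\{|\sigma|,|\tau|\}=\{1,3\}$. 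I expect this case analysis to be the main obstacle, since it is where all the bookkeeping lives; its payoff is that it pins the region down completely.

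With rigidity in hand I would propagate it. Since $a,b\ge5$, a block contains a $4\times4$ region, which is covered by four overlapping $3\times3$ windows; windows sharing a $3\times2$ or $2\times3$ overlap already agree on their common steps, so the orientation and the signs of $\sigma,\tau$ are globally constant on the region. Hence $\widetilde{m}_{i,j}=\widetilde{m}_{0,0}+\sigma i+\tau j\pmod n$ there, with $\{|\sigma|,|\tau|\}=\{1,3\}$. Finally I would exhibit a collision: because $|\tau|=3|\sigma|$ forces $3\sigma\equiv\pm\tau\pmod n$, two distinct cells of the region carry equal doubled values, namely $\widetilde{m}_{3,j}=\widetilde{m}_{0,j'}$ with $|j-j'|=1$ when $|\sigma|=1$, and symmetrically $\widetilde{m}_{i,3}=\widetilde{m}_{i',0}$ with $|i-i'|=1$ when $|\sigma|=3$. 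In either orientation both colliding cells lie in the same block, and since doubling is injective the original entries coincide as well, contradicting that the block is Latin.

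This contradiction shows $\frac{n-3}{2}$ is unattainable, so $\MmD(L_{a,b})\le\frac{n-5}{2}$, as claimed. I would close with a remark that the hypothesis $5\le a\le b$ is exactly what is needed: it guarantees the $4\times4$ region and makes \emph{both} orientations produce a collision. For $a=3$ the linear pattern can genuinely be Latin, for example the $(3,3)$-Sudoku square of Figure \ref{fig:(3,3)ex} achieves $\MmD(L_{3,3})=3=\frac{n-3}{2}$, so the obstruction only appears once both dimensions are large enough to accommodate the colliding pair.
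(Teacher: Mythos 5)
Your proof is correct, and I checked the case analysis you deferred to: after doubling, the only empty corner candidate set is $N(3)\cap N(-3)\setminus\{0\}=\emptyset$, so $3$ and $-3$ (hence also $1$ and $-1$) must occupy opposite arms of the cross, and the four corners are then pinned down (two of them have singleton candidate sets outright, the other two after discarding the value already used in the block), yielding exactly the linear patterns you describe. The first half of your argument is the paper's argument in a cleaner coordinate system: the paper likewise assumes inner distance $\frac{n-3}{2}$, centers a cross at $u=n$ deep inside a block, forces its four neighbors to be $u\pm\frac{n-1}{2}, u\pm\frac{n-3}{2}$ with the two $\pm\frac{n-3}{2}$ values opposite one another, and fills the surrounding $3\times3$ uniquely; your multiplication by $2$ merely converts the awkward increments $\pm\frac{n-1}{2},\pm\frac{n-3}{2}$ into $\mp1,\mp3$. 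Where you genuinely diverge is the finish. The paper extends the middle row of the cross by one more cell $Y$ inside the block and observes that its candidate set $f\left(-\tfrac{n-1}{2}\right)=\{n-1,n,1,2\}$ consists entirely of symbols already placed in the central $3\times3$, so $Y$ cannot be filled; this uses a $3\times5$ window in the block. You instead propagate linearity across a $4\times4$ window and exhibit the collision $3\sigma+\tau\,\Delta j=0$, i.e.\ two distinct cells of one block carrying equal symbols. Both windows exist precisely because $a,b\ge5$, so the two arguments have identical reach; yours is somewhat more systematic (the linearity statement is reusable and makes transparent why $a=3$ escapes, as your closing remark about the $(3,3)$ square shows), while the paper's is shorter because it stops at a single unfillable cell rather than establishing full linearity first.
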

\begin{proof}
If $a,b\geq 5$, in any $a\times b$ block we can find a $5\times 5$ subregion, as in Figure \ref{5 by 5 region}.

\begin{figure}[H]
\centering
\begin{tikzpicture}[scale=.6]
\draw(0,0)grid(5,5); 
\draw[step=5,ultra thick](0,0)grid(5,5);
\foreach\x[count=\i] in{ }{\node at(\i-0.5,4.5){$\x$};};
\foreach\x[count=\i] in{ ,  , i, ,  }{\node at(\i-0.5,3.5){$\x$};};
\foreach\x[count=\i] in{ , j, u, \ell, }{\node at(\i-0.5,2.5){$\x$};};
\foreach\x[count=\i] in{ , , k, , }{\node at(\i-0.5,1.5){$\x$};};
\foreach\x[count=\i] in{}{\node at(\i-0.5,0.5){$\x$};};
\end{tikzpicture}
\caption{A $5 \times 5$ region contained in an $a\times b$ block. $i,j,k,\ell,$ and $u$ must all be distinct integers, as well as the cells around them.}
\label{5 by 5 region}
\end{figure}
Without loss of generality, let $u= n$. Assume the \innerDist~of the $(a,b)$-Sudoku Latin square is $\frac{n-3}{2}$, the upper bound from Lemma \ref{theoretical upper bound}. Then $n$ has exactly four neighbors, $\{i,j,k,\ell\}=\{n \pm \frac{n-1}{2} \pmod{n},n \pm \frac{n-3}{2}\pmod{n} \}$ adjacent to it. Without loss of generality, let $i = \frac{n-3}{2}$.

Then $-\frac{n-3}{2}$ is not $j$ or $\ell$. To see this, note that $i$ and $j$ are diagonally adjacent. The cells that are adjacent to both must hold integers that are the proper distance from $i$ and $j$. However, for odd $n$, there is only one integer at this distance from both $\pm \frac{n-3}{2}$, namely $n$. Then $-\frac{n-3}{2}$ must be opposite to $\frac{n-3}{2}$.

The choice of where to place $\pm \frac{n-1}{2}$ is arbitrary, so we place $\frac{n-1}{2}$ on the left, in cell $j$. To summarize, $i = \frac{n-3}{2}$, $j = \frac{n-1}{2}$, $\ell = -\frac{n-1}{2}$, and $k = -\frac{n-3}{2}$.

Using a similar argument, we fill in the four remaining cells in the inner $3\times 3$ region. 

\begin{figure}[H]
\centering
\begin{tikzpicture}[scale=1]
\draw(0,0)grid(5,5); 
\draw[step=5,ultra thick](0,0)grid(5,5);
\foreach\x[count=\i] in{}{\node at(\i-0.5,4.5){$\x$};};
\foreach\x[count=\i] in{, w, \frac{n-3}{2}, x,  }{\node at(\i-0.5,3.5){$\x$};};
\foreach\x[count=\i] in{X, \frac{n-1}{2}, n, -\frac{n-1}{2}, Y}{\node at(\i-0.5,2.5){$\x$};};
\foreach\x[count=\i] in{, y, -\frac{n-3}{2}, z, }{\node at(\i-0.5,1.5){$\x$};};
\foreach\x[count=\i] in{}{\node at(\i-0.5,0.5){$\x$};};
\end{tikzpicture}
\caption{The updated region}
\label{5 by 5 region part 2}
\end{figure}
To do so, we define a function $f: S \to \mathscr{P}(S)$ where $S = \{1,\dots, n\}$ and $\mathscr{P}(S)$ denotes the power set of $S$. Let $f(s) = \{s\pm \frac{n-3}{2}, s\pm \frac{n-1}{2}\}$. This is the set of possible values for the cell adjacent to $s$. To find the values for $w,x,y,$ and $z$, it is helpful to note that $f(\frac{n-3}{2})=\{n-3, n-2, n-1, n\}$, $f(-\frac{n-3}{2})=\{n, 1, 2, 3\}$, $f(\frac{n-1}{2})=\{n-2, n-1, n, 1\}$, and $f(-\frac{n-1}{2})=\{n-1, n, 1, 2\}$. So the value for $x$ in Figure \ref{5 by 5 region part 2} must be an element of $f(\frac{n-3}{2})\cap f(-\frac{n-1}{2}) = \{n-1, n\}$. Since $n$ is in the center, $x= n-1$. 

Similarly, $w = n-2$, $y = 1$, $z = 2$. However, since $f(-\frac{n-1}{2})= \{n-1, n, 1,2\}$, and all four of these integers are already used in this block, $Y$ has no possible value that keeps the inner distance at least $\frac{n-3}{2}$. The same holds for $X$. This argument still holds if we had chosen $k$, $j$, or $\ell$ to be $\frac{n-3}{2}$, with the only difference being the orientation of the 5 by 5 region. 
\end{proof}

Lemma \ref{odd by odd at least 5} is the natural continuation of our argument used to prove that the \MaxMinDist~of Latin squares is $\floor{\frac{n-1}{2}}$, and that the \MaxMinDist~of $(a,b)$-Sudoku Latin squares with $a,b\geq 3$ is at most $\floor{\frac{n-3}{2}}$. This method becomes much more complicated when $n$ is allowed to be even, and much more still when $a,b\geq 7$. However, the following results seem to hint that in general, $\frac{n-a}{2}$ is the \MaxMinDist~for odd $a,b$. 

\begin{theorem}\label{(5,odd)}
If $b\geq 5$ is odd, then $\MmD(L_{5, b}) = \frac{n-5}{2}$.
\end{theorem}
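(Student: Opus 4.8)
The plan is to obtain the stated equality by sandwiching $\MmD(L_{5,b})$ between a lower bound and an upper bound, both of which are already available in the excerpt. Since $a = 5$ is odd and $b \geq 5$ is odd, the hypotheses of the two relevant results are met, so I expect no new construction or case analysis will be required; this theorem should read as a corollary of Theorem \ref{a by odd} and Lemma \ref{odd by odd at least 5}.

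First I would invoke Theorem \ref{a by odd} with $a = 5$. Its hypotheses ask only that $a \leq b$ and $b$ be odd, both of which hold here, so it yields the lower bound $\MmD(L_{5,b}) \geq \frac{n-5}{2}$. This is the constructive half: the witnessing square is produced by Algorithm \ref{generalized algorithm} with $c = \frac{n-5}{2}$, $r = \frac{n-1}{2}$ (Case 1 of that theorem, since $a = 5$ is odd), $\alpha = n$, and $\beta = 1$. Next I would invoke Lemma \ref{odd by odd at least 5}, whose hypotheses require $a, b$ odd and $5 \leq a \leq b$. With $a = 5$ these conditions read $5 \leq 5 \leq b$ with both values odd, all of which are satisfied, giving the matching upper bound $\MmD(L_{5,b}) \leq \frac{n-5}{2}$. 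I would note in passing that $\frac{n-5}{2} = \frac{5(b-1)}{2}$ is an integer because $b$ is odd, so the bound is attainable as a genuine inner distance rather than a formal quantity.

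Combining the two inequalities forces $\MmD(L_{5,b}) = \frac{n-5}{2}$. The main obstacle is essentially absent at this level of the argument: the real difficulty, namely ruling out inner distance $\frac{n-3}{2}$ by the forced-configuration analysis inside a $5 \times 5$ subregion, was already discharged in the proof of Lemma \ref{odd by odd at least 5}, and the explicit Sudoku construction was handled in Theorem \ref{a by odd}. The only point deserving care is verifying that the hypotheses of both cited results genuinely apply for $a = 5$ and every odd $b \geq 5$, and in particular that the two bounds coincide exactly rather than leaving a gap; here they meet precisely at $\frac{n-5}{2}$, which closes the problem.
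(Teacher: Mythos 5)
Your proposal is correct and matches the paper's own proof exactly: the lower bound comes from Theorem \ref{a by odd} and the upper bound from Lemma \ref{odd by odd at least 5}, and the two coincide at $\frac{n-5}{2}$. The extra check that $\frac{n-5}{2}=\frac{5(b-1)}{2}$ is an integer for odd $b$ is a nice touch but not needed beyond what the cited results already guarantee.
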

\begin{proof}
For odd $b$, Theorem \ref{a by odd} establishes $\MmD(L_{5, b}) \geq \frac{n-5}{2}$. Lemma \ref{odd by odd at least 5} establishes $\MmD(L_{5, b}) \leq \frac{n-5}{2}$.
\end{proof}

\begin{theorem}\label{(3,odd)}
If $b\geq 3$ is odd or $b=4$, then $\MmD(L_{3, b}) = \left\lfloor\frac{n-3}{2}\right\rfloor$.
\end{theorem}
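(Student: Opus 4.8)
The plan is to obtain both inequalities almost entirely from results already established, treating this theorem as a synthesis of the general upper bound with the case-specific constructions. First I would dispatch the upper bound uniformly: since $a = 3$ and $b \geq 3$ in every case covered by the statement (both $b$ odd with $b \geq 3$, and $b = 4$), Lemma \ref{theoretical upper bound} applies directly and yields $\MmD(L_{3,b}) \leq \left\lfloor \frac{n-3}{2} \right\rfloor$. This requires no new argument.

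For the lower bound I would split into the two cases named in the hypothesis. When $b$ is odd, $n = 3b$ is odd, so $n - 3$ is even and $\left\lfloor \frac{n-3}{2} \right\rfloor = \frac{n-3}{2} = \frac{n-a}{2}$. Theorem \ref{a by odd}, applied with $a = 3 \leq b$ and $b$ odd, then gives $\MmD(L_{3,b}) \geq \frac{n-a}{2} = \left\lfloor \frac{n-3}{2} \right\rfloor$, matching the upper bound exactly.

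When $b = 4$ we have $n = 12$, with $a = 3$ odd and $b = 4 \equiv 0 \pmod 4$, so Theorem \ref{odd,0mod4} is the relevant construction. It provides $\MmD(L_{3,4}) \geq \frac{n - \min\{2a, b\}}{2}$; here $2a = 6$ and $b = 4$, so $\min\{2a,b\} = 4$ and the bound reads $\frac{12 - 4}{2} = 4$. Since $\left\lfloor \frac{12 - 3}{2} \right\rfloor = 4$ as well, this again meets the upper bound, and combining the two cases completes the proof.

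The only genuine content to check is bookkeeping: that the floor in the target value coincides with $\frac{n-a}{2}$ in the odd case (a parity observation, since $n$ is odd there) and with $\frac{n - \min\{2a,b\}}{2} = 4$ in the case $b = 4$ (noting $b < 2a$ here, which is exactly the regime where Theorem \ref{odd,0mod4} delivers its improved bound). I do not anticipate a real obstacle, since the substantive work—verifying that Algorithm \ref{generalized algorithm} produces genuine $(3,b)$-Sudoku Latin squares with the stated inner distances—was already carried out in the proofs of Theorems \ref{a by odd} and \ref{odd,0mod4}.
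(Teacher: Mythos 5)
Your proposal is correct and follows exactly the same route as the paper's proof: the upper bound from Lemma \ref{theoretical upper bound}, the odd-$b$ lower bound from Theorem \ref{a by odd}, and the $b=4$ lower bound from Theorem \ref{odd,0mod4}, with the same parity and $\min\{2a,b\}$ bookkeeping. No gaps.
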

\begin{proof}
Lemma \ref{theoretical upper bound} establishes $\MmD(L_{3, b})\leq \floor{\frac{n-3}{2}}$ for all cases. For odd $b$, Theorem \ref{a by odd} establishes\\ $\MmD(L_{3, b}) \geq \frac{n-3}{2}=\left\lfloor\frac{n-3}{2}\right\rfloor$. For $b=4$, Theorem \ref{odd,0mod4} establishes $\MmD(L_{3, 4}) \geq \frac{12-\min\{6,4\}}{2}=\floor{\frac{12-3}{2}}$.
\end{proof}

\section{Acknowledgements}
Thank you to Moravian College for hosting the Research Experience for Undergraduates on Research Challenges of Computational and Experimental Mathematics during the summer of 2020. We give special thanks to Dr. Nathan Shank, Dr. Eugene Fiorini, Dr. Tony Wong, and Dr. Joshua Harrington for all the support, advice, and encouragement. Thanks to the National Science Foundation for grant \GrantNumber~for funding this research.

\end{document}